 \newtheorem{theorem}{Theorem}[section]
 \newtheorem{corollary}[theorem]{Corollary}
 \newtheorem{prop}[theorem]{Proposition}
 \newtheorem{lemma}[theorem]{Lemma}
 \theoremstyle{definition}
 \newtheorem{example}[theorem]{Example}
 \newtheorem{problem}[theorem]{Problem}
 \newtheorem{definition}[theorem]{Definition}
  \newtheorem{remark}[theorem]{Remark}
 \theoremstyle{remark}
 \newtheorem{rem}[theorem]{Remark}
\newcommand{\flip}[1]{\mathsf{flip}(#1)}
\newcommand{\asc}{\mathsf{asc}}
\newcommand{\bij}{f}
\newcommand{\bam}{\bij_{\mathsf{AM}}}  
\newcommand{\bma}{\bij_{\mathsf{MA}}}
\newcommand{\bap}{\bij_{\mathsf{AP}}}
\newcommand{\bas}{\bij_{\mathsf{AS}}} 
\newcommand{\bpa}{\bij_{\mathsf{PA}}}
\newcommand{\bmp}{\bij_{\mathsf{MP}}}
\newcommand{\bpm}{\bij_{\mathsf{PM}}}
\newcommand{\bps}{\bij_{\mathsf{PS}}}
\newcommand{\am}[1]{$\mathsf{AM#1}$}
\newcommand{\ma}[1]{$\mathsf{MA#1}$}
\newcommand{\ap}[1]{$\mathsf{AP#1}$}
\newcommand{\pa}[1]{$\mathsf{PA#1}$}
\newcommand{\Asc}{\mathsf{Asc}}
\newcommand{\RAsc}{\mathsf{RAsc}}
\newcommand{\CAsc}{\mathsf{CAsc}}
\newcommand{\Posets}{\mathsf{Posets}}
\newcommand{\RPosets}{\mathsf{RPosets}}
\newcommand{\CPosets}{\mathsf{CPosets}}
\newcommand{\Matrices}{\mathsf{Matrices}}
\newcommand{\RMatrices}{\mathsf{RMatrices}}
\newcommand{\CMatrices}{\mathsf{CMatrices}}
\newcommand{\Perms}{\mathsf{Perms}} 
\newcommand{\RPerms}{\mathsf{RPerms}} 
\newcommand{\CPerms}{\mathsf{CPerms}}  
\newcommand{\sn}{\mathcal{S}_n}
\newcommand{\RGF}{\mathsf{RGF}}
\newcommand{\Structure}{\mathsf{Struct}}
\newcommand{\reverse}{\mathrm{rev}}
\renewcommand{\complement}{\mathrm{comp}}
\newcommand{\D}{\mathsf{pan}}
\newcommand{\R}{\mathsf{R}}
\newcommand{\C}{\mathsf{C}}
\newcommand{\level}{\ell}
\newcommand{\eell}{{l}}
\newcommand{\minlevel}{\mathrm{\ell}^{\star}}
\newcommand{\remove}{\mathsf{remove}}
\newcommand{\mindex}{\mathsf{mindex}}
\newcommand{\reduce}{\mathsf{reduce}}
\newcommand{\rowsum}{\mathsf{rowsum}}
\renewcommand{\max}{\mathsf{max}}
\newcommand{\A}{\mathbf{a}}
\renewcommand{\a}{a}
\numberwithin{equation}{section}
\numberwithin{figure}{section}
\renewcommand{\theenumi}{\alph{enumi}}
\definecolor{darkgreen}{rgb}{0,0.3,0}
\title[Bijections for ascent sequences]{Refining the bijections among ascent sequences, (2+2)-free posets, integer matrices and pattern-avoiding permutations}
\author{Mark Dukes}%
\address{UCD School of Mathematics and Statistics, University College Dublin, Dublin 4, Ireland} \email{\href{mailto:mark.dukes@ucd.ie}{mark.dukes@ucd.ie}}
\urladdr{\href{http://maths.ucd.ie/~dukes/}{maths.ucd.ie/~dukes}}
\author{Peter R.\,W. McNamara}
\address{Department of Mathematics, Bucknell University, Lewisburg, PA 17837, USA}
\email{\href{mailto:peter.mcnamara@bucknell.edu}{peter.mcnamara@bucknell.edu}}
\urladdr{\href{http://www.facstaff.bucknell.edu/pm040}{www.facstaff.bucknell.edu/pm040}}
\subjclass[2010]{05A19 (Primary); 05A05, 06A06 (Secondary)} 
\keywords{ascent sequence, (2+2)-free poset, interval order, upper-diagonal matrix, pattern avoidance, series-parallel poset}
\begin{document}

\begin{abstract}
The combined work of Bousquet-M\'elou, Claesson, Dukes, Jel\'inek, Kitaev, Kubitzke and Parviainen has resulted in non-trivial bijections among ascent sequences, (2+2)-free posets, upper-triangular integer matrices, and pattern-avoiding permutations.  To probe the finer behavior of these bijections, we study two types of restrictions on ascent sequences.  These restrictions are motivated by our results that their images under the bijections are natural and combinatorially significant.  In addition, for one restriction, we are able to determine the effect of poset duality on the corresponding ascent sequences, matrices and permutations, thereby answering a question of the first author and Parviainen in this case.  The second restriction should appeal to \linebreak 
Catalaniacs.
\end{abstract}

\maketitle

\section{Introduction}

In the last decade, an interesting collection of results has emerged from the study of (2+2)-free posets, or interval orders as they are also known,
and their connection to permutations avoiding a non-standard permutation pattern of length three.
The starting point for this story was the introduction in Bousquet-M\'elou et al.~\cite{bcdk} of a new type of permutation pattern that the authors termed a {\it{bivincular pattern}}.
In that paper it was proven that length-$n$ permutations avoiding the bivincular pattern
$2|3\overline{1}$ were in one-to-one correspondence with unlabelled (2+2)-free posets on $n$ elements.
This was shown by encoding both structures as an integer sequence of length $n$ that has come to be known as an {\it{ascent sequence}}.  Via ascent sequences, Bousquet-M\'elou et al.\ were able to solve the long-standing open problem of enumerating unlabelled (2+2)-free posets.

In \cite{dp}, these ascent sequences were shown to uniquely encode another set of objects:
all square upper-triangular matrices of non-negative integers whose entries sum to $n$ which have neither rows nor columns consisting of only zeros.  Such matrices are known as \emph{Fishburn matrices} since they were introduced by Fishburn \cite{Fis}; these matrices and the three previously mentioned classes are enumerated by the Fishburn numbers \cite[\href{https://oeis.org/A022493}{A022493}]{oeis}.

These initial two papers linking (bijectively) four different discrete objects led to a series of papers that studied these bijections and built upon the correspondences.
From the enumerative viewpoint, Dukes et al.~\cite{dkrs} and Kitaev \& Remmel~\cite{kr} considered these objects according to several statistics (such as the number of minimal/maximal elements) and presented multivariate generating functions for these statistics.
In Claesson et al.~\cite{cdkub}, the bijections from the original papers were lifted to achieve bijections between
labelled (2+2)-free posets, upper-triangular matrices whose entries partition a set, and a form of coloured ascent sequences.
This lift of the correspondences to the labelled setting was used to give a bijection from unlabelled (2+2)-free posets to Fishburn matrices~\cite{djk}, which is equivalent to the definition of the \emph{characteristic matrix} in \cite[\S 2.3]{Fis}).

In another direction, and more recently, it has emerged that refinements of these correspondences have equally compelling stories to tell.
Duncan \& Steingr\'imsson~\cite{ds} studied pattern avoidance in ascent sequences and established bijections between pattern avoiding ascent sequences and other combinatorial objects such as set partitions and objects enumerated by the Catalan and Narayana numbers.
Jel\'inek~\cite{JelSelfdual} presented a new method to derive formulas for the generating functions of interval orders.
The method generalised the results of~\cite{dkrs,kr} and also allowed the enumeration of self-dual interval orders with respect to several statistics.
Using his newly derived generating function formulas, Jel\'inek proved a bijective relationship between self-dual interval orders and upper-triangular matrices having no zero rows~\cite{JelSelfdual}.  
Andrews \& Jel\'inek~\cite{AndJel} built on Jel\'inek's work and proved several power series identities involving the refined generating functions for interval orders and self-dual interval orders.
Keller and Young~\cite{ky} considered the difficult question of determining which ascent sequences map to semiorders; also known as unit interval orders, semiorders are posets that are both (2+2) and (3+1)-free. See also \cite{kr} for a consideration of this question.

The present paper adds to this body of work by analyzing two types of restrictions on ascent sequences.
One motivation for these restrictions is that their images through the bijections of~\cite{bcdk,djk,dp} are combinatorially significant in the subsets they identify,
e.g. series-parallel posets and 231-avoiding permutations.
Moreover, the analysis of the images of these ascent sequences allows us to prove results about duals of each of the structures,
thus going some way in answering an open problem of Dukes \& Parviainen~\cite{dp}.

The first type of restriction we study (in Section 3) begins with a restriction on the types of ascents one may have in an ascent sequence.  
In particular, when the bijection of~\cite{bcdk} recursively builds a (2+2)-free poset from an ascent sequence, there are some ascents that cause complicated and unnatural modifications to the poset, while the bijection treats all other ascents in a very natural way.  Our first restriction is to those ascent sequences that contain only these ascents that result in this latter natural behaviour.  A motivation for this restriction is that this good behaviour carries through to the general framework of bijections.  Indeed, the images of these new \emph{restricted ascent sequences} $\RAsc$ through the bijections given in~\cite{bcdk,djk,dp} are proven to be simple restrictions:
the subset $\RMatrices$ of the matrices from~\cite{dp} having only positive diagonal entries,
the subset $\RPosets$ of (2+2)-free posets which have a chain 
of the maximal possible length, and the set $\RPerms$ of permutations avoiding the barred pattern $3 \overline{1}5 2\overline{4}$. This set $\RPerms$ was already identified in~\cite{bcdk} in the context of modified ascent sequences.  See Figure~\ref{fig:summary} for a diagram outlining our sets and maps of interest.
 
In Section 4 we give a partial solution to an open problem of Dukes \& Parviainen~\cite{dp} by addressing the topic of structural duality.
The dual $P^*$ of a (2+2)-free poset $P$ is also a (2+2)-free poset.
This observation prompts the question as to whether one can derive the ascent sequence (resp.\ permutation) corresponding to $P^*$ from the ascent sequence (resp.\ permutation)  corresponding to $P$.  This question seems intractable in general because of the complicated map between some ascent sequences and posets as mentioned in the previous paragraph.  However, consistent with our motivation for restricting to better-behaved sets, we can answer this duality question completely for all posets in $\mathsf{RPosets}$, which we do in Section 4.

In Section 5, we consider the Catalan family $\CAsc$ of 101-avoiding ascent sequences studied in~\cite{ds}, and investigate their images under the bijections of~\cite{bcdk,djk,dp}.  (The restricted ascent sequences of \cite{kr} are also enumerated by Catalan numbers but are different from $\CAsc$.)  The results are perhaps even nicer than the $\R$-families and are shown in Figure~\ref{fig:summary}.  The posets that arise are the series-parallel interval orders, i.e., those that are both (2+2)-free and N-free.  This class of posets appears in \cite{dfpr10, dfpr12}, while series-parallel posets in general are widespread in the literature, partially because their recursive structure permits many polynomial-time algorithms (see, for example, \cite{Gor} and the references therein).  The matrices and permutations which correspond to these ascent sequences are those matrices from $\RMatrices$ that are termed \emph{SE-free} in \cite{JelFishburn}, and 231-avoiding permutations, respectively.

We conclude in Section~\ref{sec:conclusion} with some open questions.

\begin{figure}
\begin{tikzpicture}[scale=0.5, rounded corners = 5pt]
\begin{scope}
\draw (0,6.05) node {$\Posets$:\,(2+2)-free posets $P$};
\draw (0,4.5) node {$\RPosets$: $P$ has a};
\draw (0,3.6) node {chain of length $\level(P)$};
\draw (0,2.2) node {$\CPosets$:};
\draw (0,1.3) node {series-parallel $P$};
\draw (-5,0) rectangle (5,7);
\draw (-4,0.2) rectangle (4,5);
\draw (-3,0.4) rectangle (3,3);
\end{scope}
\begin{scope}[xshift = 90ex]
\draw (0,6.05) node {$\Asc$: ascent sequences $\A$};
\draw (0,4.05) node {$\RAsc$: self-modified $\A$};
\draw (0,2.2) node {$\CAsc$:};
\draw (0,1.3) node {$101$-avoiding $\A$};
\draw (-5,0) rectangle (5,7);
\draw (-4,0.2) rectangle (4,5);
\draw (-3,0.4) rectangle (3,3);
\end{scope}
\begin{scope}[yshift = -75ex]
\draw (0,6.5) node {$\Matrices$: upper-$\triangle$ $\mathbb{N}$-matrices};
\draw (0,5.6) node {$M$ w/o empty rows or columns};
\draw (0,4.5) node {$\RMatrices$: $M$ has only};
\draw (0,3.6) node {positive diagonal entries};
\draw (0,2.2) node {$\CMatrices$:};
\draw (0,1.3) node {$M$ is SE-free};
\draw (-5,0) rectangle (5,7);
\draw (-4,0.2) rectangle (4,5);
\draw (-3,0.4) rectangle (3,3);
\end{scope}
\begin{scope}[xshift = 90ex, yshift = -75ex]
\draw (0,6.05) node {$\Perms$: $\sn(2|3\overline{1})$};
\draw (0,4.05) node {$\RPerms$: $\sn(3\overline{1}52\overline{4})$};
\draw (0,1.7) node {$\CPerms$: $\sn(231)$};
\draw (-5,0) rectangle (5,7);
\draw (-4,0.2) rectangle (4,5);
\draw (-3,0.4) rectangle (3,3);
\end{scope}
\draw[->] (-1,-1) -- (-1,-3);
\draw (-1.7,-2) node {$\bpm$};
\draw[<-] (1,-1) -- (1,-3);
\draw (1.7,-2) node {$\bmp$};
\draw[->] (5.8,4.5) -- (7.8,4.5);
\draw (6.8,5) node {$\bpa$};
\draw[<-] (5.8,2.5) -- (7.8,2.5);
\draw (6.8, 2) node {$\bap$};
\draw[->] (13.7,-1) -- (13.7,-3);
\draw (13.0,-2) node {$\bas$};
\draw[->] (5.2,-2.8) -- (7.2,-0.8);
\draw (5.7, -1.3) node {$\bma$};
\draw[<-] (6.2,-3.7) -- (8.2,-1.7);
\draw (7.75, -3.05) node {$\bam$};

\end{tikzpicture}
\caption{A diagrammatic summary of the sets and bijections of interest.}
\label{fig:summary}
\end{figure}
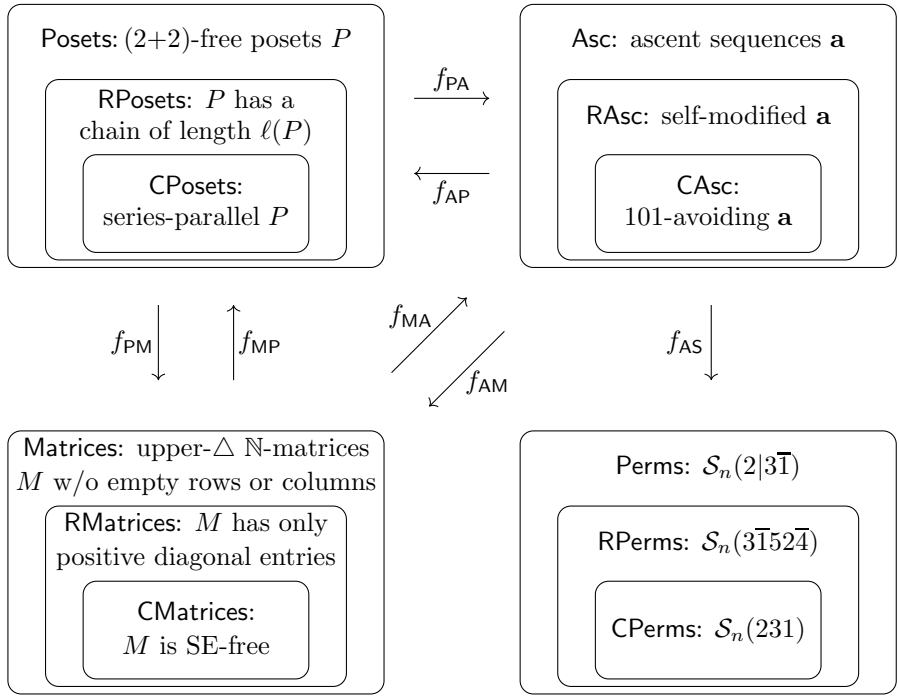

\section{Preliminaries}

While the intimate connections between four different types of objects are certainly a strength of this area of study, the drawback for our present purposes is there is a considerable amount of background that needs to be introduced, including all four classical sets and many of the bijections among them.  Our use of the words ``classical'' refers to the full sets considered by most of the papers mentioned in the Introduction, and as shown by the largest boxes in Figure~\ref{fig:summary}:  ascent sequences, (2+2)-free posets, upper-triangular matrices with non-negative integer entries having neither rows nor columns of all zeros, and permutations avoiding $2|3\overline{1}$.

\subsection{The classical sets}\label{sub:classicalsets}

An \emph{ascent sequence} is a sequence $\A = (\a_1, \ldots,\a_n)$ of non-negative integers such that $\a_1=0$, and for all $i$ with $1<i \leq n$ we have $\a_i \leq \asc(\a_1,\ldots,\a_{i-1}) +1$, where $\asc(\a_1,\ldots,\a_k)$ denotes the number of ascents in the sequence $(\a_1,\ldots,\a_k)$.  For example $(0,1,0,1,3)$ is an ascent sequence whereas $(0,1,0,2,4)$ is not.  Let $\Asc_n$ be the set of all ascent sequences of length $n$, and let $\Asc$ denote the union of these sets over all $n$, with the same convention applying to all the notation below when the subscript $n$ is dropped.

Let $\Posets_n$ be the set of (2+2)-free posets on $n$ elements, meaning posets that have no induced subposet isomorphic to a disjoint union of two 2-element chains.  We will be interested in a different defining property of (2+2)-free posets, as we now describe.  Let $P=(P,\preceq)$ be a poset with $n$ elements.  Given $x \in P$, the set $D(x)=\{y \in P : y \prec_P x\}$ is called the \emph{strict downset} of $x$.  A fact described as ``well known'' in~\cite{Bog} and which is easy to check is that a poset is (2+2)-free if and only if the set of strict downsets of elements of $P$ can be linearly ordered by inclusion.  We let $\level(P)$ denote the number of distinct nonempty such downsets, so that $D(P)=(D_0,\ldots,D_{\level(P)})$ is the sequence of downsets of $P$ linearly ordered by inclusion.  In other words $\emptyset = D_0 \subsetneq D_1 \subsetneq \ldots \subsetneq D_{\level(P)}$.  For example, for the poset $P$ in the top left of Figure~\ref{fig:examples}, we have
\[
D(P) = (\emptyset, \{p_1,p_2\}, \{p_1, p_2, p_5\}, \{p_1, p_2, p_3, p_5\}).
\]
(Note that while this example $P$ is labelled for the purposes of the explanation, the elements of $\Posets$ are unlabelled.)
We will call $D_i$ \emph{level} $i$ of $P$, and an element $x \in P$ with $D(x)=D_i$ for some $i$ will be said to \emph{lie at level} $i$ of $P$.
Let $L_i=L_i(P)$ be the set of elements lying at level $i$ of $P$ and set $L(P)=(L_0,\ldots,L_{\level(P)})$.
Again for $P$ from Figure~\ref{fig:examples}, 
\[
L(P) = (\{p_1, p_2, p_5\}, \{p_6\}, \{p_3\}, \{p_4\}).
\]
%

Let $\Matrices_n$ be the set of all upper-triangular matrices whose entries are all non-negative integers such that there is neither a row nor a column containing only zeros, and whose sum of all entries is $n$.  Observe that the dimension $d$ of an element of $\Matrices_n$ satisfies $1 \leq d \leq n$.

A sequence $\A = (\a_1, \ldots, \a_r)$ of non-negative integers is said to \emph{contain} a sequence $\mathbf{b} =(b_1, \ldots, b_s)$ \emph{as a pattern} if there exists a subsequence of $\A$ of length $s$ whose elements are in the same relative order as those of $\mathbf{b}$.  We say $\A$ is \emph{$\mathbf{b}$-avoiding} if it does not contain $\mathbf{b}$.  For example, $(0,2,1,3,1,0,2)$, which we write as 0213102 for short, contains the pattern $0101$ because of its subsequence $0202$, but avoids the pattern $1010$.  

When considering pattern-avoidance in sequences that are permutations, we allow for a more general notion of pattern: a permutation $\pi = \pi_1\ldots \pi_n$ is said to \emph{contain} the pattern $2|3\overline{1}$ if there exists an occurrence $\pi_i \pi_j \pi_k$ of 231 in $\pi$ with the additional conditions that $j=i+1$ and $\pi_i = \pi_k+1$.  For example, $32541$ contains $2|3\overline{1}$ because of the occurrence $251$ of $231$, whereas $31452$ avoids $2|3\overline{1}$ even though it has three occurrences of the classical pattern $231$.  The pattern $2|3\overline{1}$ is an example of a \emph{bivincular pattern} as introduced in~\cite{bcdk} since it puts conditions on both the entries and positions of an occurrence.  As usual, we let $\sn(2|3\overline{1})$ denote the set of permutations of length $n$ that avoid $2|3\overline{1}$, and this is exactly our set $\Perms_n$.

\subsection{The bijections}\label{sub:bijections}

In this subsection, we gather the classical bijections from the literature~\cite{bcdk,djk,dp} that we need.  We refer the reader to these references for the proofs of bijectivity and our claims that particular pairs of maps are inverses.  Due to its length, the reader may prefer to skip this subsection and instead refer back to it as a reference.  

To denote the bijections from~\cite{bcdk,djk,dp}, we will use labels according to their domain and codomain, but rather than use the labels $\Asc$, $\Posets$, $\Matrices$ and $\Perms$, we use the single-letter subscripts $\mathsf{A}$, $\mathsf{P}$, $\mathsf{M}$ and $\mathsf{S}$ (``$\mathsf{S}$'' for ``symmetric group'').  For example, $\bap$ denotes the bijection of~\cite{bcdk} from ascent sequences to (2+2)-free posets.

If $a<b$ are integers, we use the notation $[a,b]$ for the set $\{a, \ldots,b\}$ and $[a,b)$ for the set $\{a,\ldots,b-1\}$, etc.  

For each of our bijections, we will refer to the example in Figure~\ref{fig:examples}.

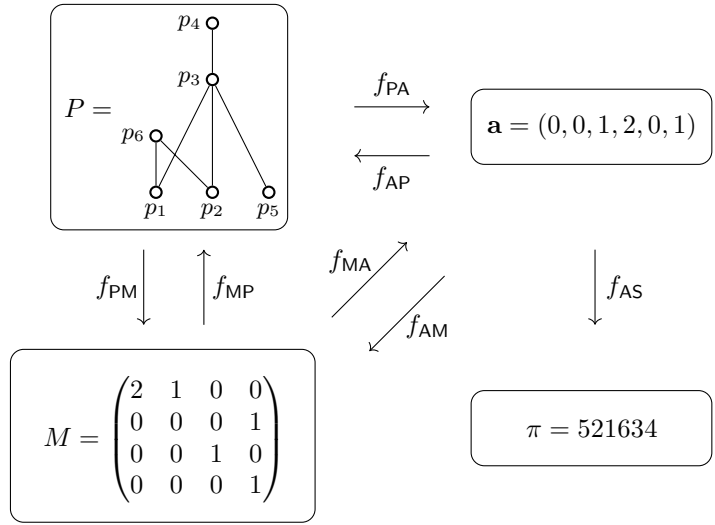
\begin{figure}
\begin{tikzpicture}[scale=0.5, rounded corners = 5pt]
\begin{scope}[yshift=15, circle, xshift=15, inner sep=1.5pt];
\draw (-2.8,-1.0) rectangle (3.5,5);
\draw (-1.8,2.25) node {$P=$};
\draw[thick] (0,0) node[draw] (1) {$$};
\draw[thick] (1.5,0) node[draw] (2) {$$};
\draw[thick] (3,0) node[draw] (5) {$$};
\draw[thick] (1.5,3) node[draw] (3) {$$};
\draw[thick] (1.5,4.5) node[draw] (4) {$$};
\draw[thick] (0,1.5) node[draw] (6) {$$};
\draw (1) -- (6) -- (2) -- (3) -- (1);
\draw (4) -- (3) -- (5);
\draw (0,-0.5) node {\small$p_1$};
\draw (1.5,-0.5) node {\small$p_2$};
\draw (3,-0.5) node {\small$p_5$};
\draw (-0.6,1.5) node  {\small$p_6$};
\draw (0.9,3) node  {\small$p_3$};
\draw (0.9,4.5) node  {\small$p_4$};
\end{scope}
\begin{scope}[xshift = 80ex, yshift=15ex]
\draw (0,0) node {$\A = (0,0,1,2,0,1)$};
\draw (-3.2,-1) rectangle (3.2,1);
\end{scope}
\begin{scope}[yshift = -40ex, xshift=5ex]
\draw (-4.1,-2.2) rectangle (4,2.4);
\draw (0,0) node {$M = 
\begin{pmatrix} 2 & 1 & 0 & 0 \\ 0 & 0 & 0 & 1 \\ 0 & 0 & 1 & 0 \\ 0 & 0 & 0 & 1 \end{pmatrix}$};
\end{scope}
\begin{scope}[xshift = 80ex, yshift = -38ex]
\draw (0,0) node {$\pi = 521634$};
\draw (-3.2,-1) rectangle (3.2,1);
\end{scope}
\draw[->] (0.2,-1) -- (0.2,-3);
\draw (-0.5,-2) node {$\bpm$};
\draw[<-] (1.8,-1) -- (1.8,-3);
\draw (2.6,-2) node {$\bmp$};
\draw[->] (5.8,2.8) -- (7.8,2.8);
\draw (6.8,3.4) node {$\bpa$};
\draw[<-] (5.8,1.5) -- (7.8,1.5);
\draw (6.8, 0.9) node {$\bap$};
\draw[->] (12.2,-1) -- (12.2,-3);
\draw (13.0,-2) node {$\bas$};
\draw[->] (5.2,-2.8) -- (7.2,-0.8);
\draw (5.7, -1.3) node {$\bma$};
\draw[<-] (6.2,-3.7) -- (8.2,-1.7);
\draw (7.75, -3.05) node {$\bam$};

\end{tikzpicture}
\caption{The examples used when defining the bijections in Subsection~\ref{sub:bijections}.}
\label{fig:examples}
\end{figure}

\subsubsection{Ascent sequences and matrices}\label{sub:bam}
Let $\A = (\a_1, \ldots, \a_n)$ be an ascent sequence and define the truncated sequence $\A^{(k)}=(a_1,\ldots,a_k)$.
If $M$ is a $d \times d$ matrix, then we write $\dim(M) = d$.  Let $\mindex(M)$ be the lowest index of a row whose rightmost entry is non-zero.  For example, $\mindex(M)=2$ for $M$ from Figure~\ref{fig:examples}.  The following map first appears in~\cite{dp} where it is shown to be a bijection to $\Matrices_n$.

\begin{definition}\label{def:bam}
Given $\A = (\a_1, \ldots, \a_n) \in \Asc_n$, we define $\bam(\A)$ recursively.  First, $\bam(\A^{(1)}) = (1)$, a $1\times 1$ matrix.  Supposing $M^{(k)} = \bam(\A^{(k)})$ for some $k \in [1,n)$, we have the following three cases for defining $M^{(k+1)} = \bam(\A^{(k+1)})$.
\begin{enumerate}
\item[\am1]  If $\a_{k+1} \in [0,\mindex(M^{(k)}))$ then let $M^{(k+1)}$ be the matrix $M^{(k)}$ with the entry at position $(\a_{k+1}+1,\dim(M^{(k)}))$ increased by one.
\item[\am2] If $\a_{k+1}=\dim(M^{(k)})$ then let $M^{(k+1)}$ be the result of appending a new row and column of zeros to the matrix $M^{(k)}$, and inserting a one into the new diagonal entry.
\item[\am3] If $\a_{k+1} \in [\mindex(M^{(k)}),\dim(M^{(k)}))$ then let $M^{(k+1)}$ be the outcome of the following: in $M^{(k)}$, insert a new (empty) row between rows $\a_{k+1}$ and $1+\a_{k+1}$, and insert a new (empty) column between columns $\a_{k+1}$ and $1+\a_{k+1}$. Let the new row be filled with all zeros except let the rightmost entry be 1.  Move all the entries in the rightmost column above where it was prised apart to the left to the new empty spaces, and fill their former positions with zeros. Finally let all other entries in the new column be zero. 
\end{enumerate}
Then $\bam(\A) = \bam(\A^{(n)}) = M^{(n)}$.  
\end{definition}

\begin{example}\label{exa:bam}
Let $\A = (0,0,1,2,0,1)$ as in Figure~\ref{fig:examples}. 
\begin{itemize}
\item  $\bam(\A^{(1)})=(1)=M^{(1)}$. 
\item Since $\a_{2} \in [0,\mindex(M^{(1)})=1)$, \am1 applies and $M^{(2)}=(2)$.
\item Since $\a_{3} =\dim(M^{(2)})$, \am2 applies and $M^{(3)}=\left(\begin{smallmatrix} 2 & 0 \\ 0 & 1 \end{smallmatrix}\right)$.
\item Since $\a_{4} =\dim(M^{(3)})$, \am2 applies and $M^{(4)}=\left(\begin{smallmatrix} 2 & 0 &0\\ 0 & 1 & 0 \\ 0&0&1 \end{smallmatrix}\right)$.
\item Since $\a_{5} \in [0,\mindex(M^{(4)})=3)$, \am1 applies and $M^{(5)}=\left(\begin{smallmatrix} 2 & 0 &1\\ 0 & 1 & 0 \\ 0&0&1 \end{smallmatrix}\right)$.
\item Since $\a_{6} \in [\mindex(M^{(5)})=1,\dim(M^{(5)})=3)$, \am3 applies and we prise the matrix $M^{(5)}$ apart between rows 1 and 2, and columns 1 and 2 to get $\left(\begin{smallmatrix} 2&\ &0&1 \\ &&& \\ 0&&1&0 \\ 0&&0&1 \end{smallmatrix}\right)$.  We fill the new second row with zeros except for a 1 in column 4.  We move the single entry 1 in column 4 above the new row into the new second column, preserving this entry's row index,  and put zeros in its former position and in the rest of the new column.
We get $M^{(6)}=\left(\begin{smallmatrix} 2&1&0&0 \\ 0&0&0&1 \\ 0&0&1&0 \\ 0&0&0&1 \end{smallmatrix}\right) = \bam(\A)$.
\end{itemize}
\end{example}

As is clear in the above definition, \am3 is by far the most involved, and this will be a feature of \ap3 too.  When \am3 is used in a recursive step, the results of the bijection quickly become intractable.  The point of the $\R$-families is that they are defined exactly so that \am3 and \ap3 are never invoked, resulting in more manageable analysis.  

We could define $\bma$ as just the inverse of the bijection $\bam$ but it will be useful to write an explicit definition following~\cite{dp}.  
Example~\ref{exa:bam} in reverse serves as an example of Definition~\ref{def:bma}.  Let $\rowsum_i(M)$ denote the sum of the entries in row $i$ of $M$.

\begin{definition}\label{def:bma}
Given $M \in \Matrices_n$, define $\reduce(M) \in \Matrices_{n-1}$ in the following fashion.
\begin{enumerate}
\item[\ma1] If $\rowsum_{\mindex(M)}(M) > 1$ then let $\reduce(M)$ equal $M$ with the value at position $(\mindex(M), \dim(M))$ reduced by 1.  
\item[\ma2] If $\rowsum_{\mindex(M)}(M) = 1$ and $\mindex(M) = \dim(M)$ then let $\reduce(M)$ equal $M$ with row $\dim(M)$ and column $\dim(M)$ removed.
\item[\ma3] If $\rowsum_{\mindex(M)}(M) = 1$ and $\mindex(M) < \dim(M)$ then perform the following modifications to $M$ to form $\reduce(M)$.  For $j \in [1, \mindex(M))$, move the entry in position $(j, \mindex(M))$ to position $(j, \dim(M))$.  Then delete row $\mindex(M)$ and (the empty) column $\mindex(M)$.
\end{enumerate}
Now recursively define $M^{(n)} = M$, and $M^{(k)} = \reduce(M^{(k+1)})$ for $k \in [1,n)$.  Let $a_k = \mindex(M^{(k)})-1$ for $i \in [1,n]$ and define $\bma(M) = (a_1, \ldots, a_n)$, which is an ascent sequence~\cite{dp}.  Note that in following this recursive procedure, the sequence $(a_1, \ldots, a_n)$ is constructed from right-to-left.
\end{definition}

\begin{remark}
A bijection between ascent sequences and Fishburn matrices is given in \cite{cyz} which differs from ours only in the definition of \am3.  Since our results are confined to those ascent sequences where \am3 is never invoked, our results would work equally well using the bijection of Chen, Yan and Zhou.  
\end{remark}

\subsubsection{Ascent sequences and posets}

We next introduce the bijections between $\Asc_n$ and $\Posets_n$ from \cite{bcdk}.  Given a (2+2)-free poset $P$, recall that $\level(P)$ denotes the highest index of a level or, equivalently, $\level(P)+1$ is the  number of levels in $P$.  Let $\minlevel(P)$ denote the minimum index of a level that contains a maximal element.  Recall the sequence $L(P)$ of level sets defined in Subsection~\ref{sub:classicalsets}.  The definition of $\bap$ below is very similar in structure to Definition~\ref{def:bam} above of $\bam$.

\begin{definition}\label{def:bap}
Given $\A = (\a_1, \ldots, \a_n) \in \Asc_n$, we define $\bap(\A)$ recursively.  First, $\bap(\A^{(1)})$ 
is the poset consisting of a single element $p_1$.  Supposing $P^{(k)} = \bap(\A^{(k)})$ for some $k \in [1,n)$, we have the following three cases for defining $P^{(k+1)} = \bap(\A^{(k+1)})$.
\begin{enumerate}
\item[\ap1]  If $\a_{k+1} \in [0,\minlevel(P^{(k)})]$ then let $P^{(k+1)}$ be the result of adding to $P^{(k)}$ a new maximal element $p_{k+1}$ that covers the same elements as do the elements in $L_{a_{k+1}}(P^{(k)})$.
\item[\ap2] If $\a_{k+1}=1+\level(P^{(k)})$ then let $P^{(k+1)}$ be the result of adding to $P^{(k)}$ a new element $p_{k+1}$ covering all maximal elements of $P^{(k)}$. 
\item[\ap3] If $\a_{k+1} \in (\minlevel(P^{(k)}), \level(P^{(k)})]$ 
then let $P^{(k+1)}$ be the outcome of the following: to $P^{(k)}$, 
add a new element $p_{k+1}$ covering the same elements as the elements in $L_{a_{k+1}}(P^{(k)})$.  Let $\mathcal{M}$ be the set of maximal elements of $P^{(k)}$ lying at any level less than $\a_{k+1}$.  
Add all relations $x \preceq y$ where $x \in \mathcal{M}$ and $y$ is any element of $L_{a_{k+1}}(P^{(k)}) \cup \cdots \cup L_{\level(P^{(k)})}(P^{(k)})$; 
here we do not consider the new element $p_{k+1}$ to be an element of $L_{a_{k+1}}(P^{(k)})$.
\end{enumerate}
Then $\bap(\A) = \bap(\A^{(n)}) = P^{(n)}$.  
\end{definition}

\begin{example}\label{exa:bap}
Let $\A = (0,0,1,2,0,1)$ as in Figure~\ref{fig:examples}.  Certainly, $P^{(1)}$ is the one-element poset.  The recursive construction of $P = \bap(\A)$ appears in Figure~\ref{fig:bapexample}, where the dotted shapes depict the different levels.  The element $a_{k+1}$ appears above each arrow and the case name appears below each arrow. The labels $p_i$ are just for expository purposes and are not part of $\bap(\A)$.  In the final step, $\mathcal{M} = \{p_5\}$.  Note that the new element $p_6$ in the final step ends up on its own level, and this is true in general for applications of \ap3.
\end{example}

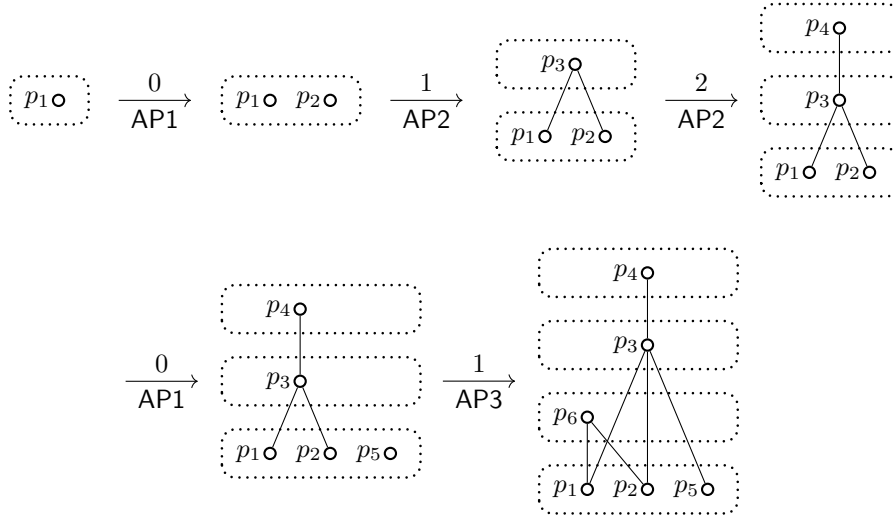
\begin{figure}
\begin{tikzpicture}[scale=0.8]
\tikzstyle{every node}=[circle, inner sep=1.5pt];
\begin{scope}
\draw[thick] (0,0) node[draw] (p1) {$$};
\draw (-0.35,0) node {$p_1$};
\draw [line width=1pt, line cap=round, dash pattern=on 0pt off 3pt, rounded corners=5pt] (-0.8,-0.4) rectangle (0.5,0.4);
\draw[->] (1,0) -- (2.2,0);
\draw (1.6,0.3) node {0};
\draw (1.6,-0.3) node {\ap1};
\end{scope}
\begin{scope}[xshift = 100]
\draw[thick] (0,0) node[draw] (p1) {$$};
\draw[thick] (1,0) node[draw] (p2) {$$};
\draw (-0.35,0) node {$p_1$};
\draw (0.65,0) node {$p_2$};
\draw [line width=1pt, line cap=round, dash pattern=on 0pt off 3pt, rounded corners=5pt] (-0.8,-0.4) rectangle (1.5,0.4);
\draw[->] (2,0) -- (3.2,0);
\draw (2.6,0.3) node {1};
\draw (2.6,-0.3) node {\ap2};
\end{scope}
\begin{scope}[xshift = 230]
\draw[thick] (0,-0.6) node[draw] (p1) {$$};
\draw[thick] (1,-0.6) node[draw] (p2) {$$};
\draw[thick] (0.5,0.6) node[draw] (p3) {$$};
\draw (-0.35,-0.6) node {$p_1$};
\draw (0.65,-0.6) node {$p_2$};
\draw (0.15,0.6) node {$p_3$};
\draw (p1) -- (p3) -- (p2);
\draw [line width=1pt, line cap=round, dash pattern=on 0pt off 3pt, rounded corners=5pt] (-0.8,-1) rectangle (1.5,-0.2);
\draw [line width=1pt, line cap=round, dash pattern=on 0pt off 3pt, rounded corners=5pt] (-0.8,0.2) rectangle (1.5,1);
\draw[->] (2,0) -- (3.2,0);
\draw (2.6,0.3) node {2};
\draw (2.6,-0.3) node {\ap2};
\end{scope}
\begin{scope}[xshift = 355, yshift = -17]
\draw[thick] (0,-0.6) node[draw] (p1) {$$};
\draw[thick] (1,-0.6) node[draw] (p2) {$$};
\draw[thick] (0.5,0.6) node[draw] (p3) {$$};
\draw[thick] (0.5,1.8) node[draw] (p4) {$$};
\draw (-0.35,-0.6) node {$p_1$};
\draw (0.65,-0.6) node {$p_2$};
\draw (0.15,0.6) node {$p_3$};
\draw (0.15,1.8) node {$p_4$};
\draw (p1) -- (p3) -- (p2);
\draw (p3) -- (p4);
\draw [line width=1pt, line cap=round, dash pattern=on 0pt off 3pt, rounded corners=5pt] (-0.8,-1) rectangle (1.5,-0.2);
\draw [line width=1pt, line cap=round, dash pattern=on 0pt off 3pt, rounded corners=5pt] (-0.8,0.2) rectangle (1.5,1);
\draw [line width=1pt, line cap=round, dash pattern=on 0pt off 3pt, rounded corners=5pt] (-0.8,1.4) rectangle (1.5,2.2);
\end{scope}
\begin{scope}[xshift = 100, yshift = -150]
\draw[thick] (0,-0.6) node[draw] (p1) {$$};
\draw[thick] (1,-0.6) node[draw] (p2) {$$};
\draw[thick] (0.5,0.6) node[draw] (p3) {$$};
\draw[thick] (0.5,1.8) node[draw] (p4) {$$};
\draw[thick] (2,-0.6) node[draw] (p5) {$$};
\draw (-0.35,-0.6) node {$p_1$};
\draw (0.65,-0.6) node {$p_2$};
\draw (0.15,0.6) node {$p_3$};
\draw (0.15,1.8) node {$p_4$};
\draw (1.65,-0.6) node {$p_5$};
\draw (p1) -- (p3) -- (p2);
\draw (p3) -- (p4);
\draw [line width=1pt, line cap=round, dash pattern=on 0pt off 3pt, rounded corners=5pt] (-0.8,-1) rectangle (2.5,-0.2);
\draw [line width=1pt, line cap=round, dash pattern=on 0pt off 3pt, rounded corners=5pt] (-0.8,0.2) rectangle (2.5,1);
\draw [line width=1pt, line cap=round, dash pattern=on 0pt off 3pt, rounded corners=5pt] (-0.8,1.4) rectangle (2.5,2.2);
\draw[->] (-2.4,0.6) -- (-1.2,0.6);
\draw (-1.8,0.9) node {0};
\draw (-1.8,0.3) node {\ap1};
\end{scope}
\begin{scope}[xshift = 250, yshift = -167]
\draw[thick] (0,-0.6) node[draw] (p1) {$$};
\draw[thick] (1,-0.6) node[draw] (p2) {$$};
\draw[thick] (1,1.8) node[draw] (p3) {$$};
\draw[thick] (1,3.0) node[draw] (p4) {$$};
\draw[thick] (2,-0.6) node[draw] (p5) {$$};
\draw[thick] (0,0.6) node[draw] (p6) {$$};
\draw (-0.35,-0.6) node {$p_1$};
\draw (0.65,-0.6) node {$p_2$};
\draw (0.65,1.8) node {$p_3$};
\draw (0.65,3.0) node {$p_4$};
\draw (1.65,-0.6) node {$p_5$};
\draw (-0.35,0.6) node {$p_6$};
\draw (p1) -- (p3) -- (p2);
\draw (p5) -- (p3) -- (p4);
\draw (p1) -- (p6) -- (p2);
\draw [line width=1pt, line cap=round, dash pattern=on 0pt off 3pt, rounded corners=5pt] (-0.8,-1) rectangle (2.5,-0.2);
\draw [line width=1pt, line cap=round, dash pattern=on 0pt off 3pt, rounded corners=5pt] (-0.8,0.2) rectangle (2.5,1);
\draw [line width=1pt, line cap=round, dash pattern=on 0pt off 3pt, rounded corners=5pt] (-0.8,1.4) rectangle (2.5,2.2);
\draw [line width=1pt, line cap=round, dash pattern=on 0pt off 3pt, rounded corners=5pt] (-0.8,2.6) rectangle (2.5,3.4);
\draw[->] (-2.4,1.2) -- (-1.2,1.2);
\draw (-1.8,1.5) node {1};
\draw (-1.8,0.9) node {\ap3};
\end{scope}
\end{tikzpicture}
\caption{The recursive construction of Example~\ref{exa:bap}.}
\label{fig:bapexample}
\end{figure}

To define the inverse $\bpa$, a key observation from Definition~\ref{def:bap} is that $p_{k+1}$ is always a maximal element of $P^{(k+1)}$ and, furthermore, $a_{k+1}$ will be exactly $\minlevel(P^{(k+1)})$.  Therefore, $\bpa$ will be a recursive map that obtains $P^{(k)}$ from $P^{(k+1)}$ by removing a maximal element at level $ \minlevel(P^{(k+1)})$ and records this level value as $a_{k+1}$.  Since all maximal elements at the same level in a (2+2)-free poset are order equivalent, it does not matter which element we remove.  Of course, care has to be taken to make sure we invert \ap3 correctly: see \pa3 below.

Recall that $D_i(P)$ denotes the strict downset of the elements at level $i$.  Again, the definition below parallels the corresponding one from Subsection~\ref{sub:bam}, i.e., Definition~\ref{def:bma}.
Obviously, Figure~\ref{fig:bapexample} in reverse serves as an example of Definition~\ref{def:bpa}.

\begin{definition}\label{def:bpa}
Given $P \in \Posets_n$, let $i = \minlevel(P)$ and define $\remove(P) \in \Posets_{n-1}$ in the following fashion.
\begin{enumerate}
\item[\pa1] If $|L_i| > 1$ then let $\remove(P)$ equal $P$ with a maximal element at level $i$ removed.  \item[\pa2] If $|L_i| = 1$ and $i = \level(P)$ then obtain $\remove(P)$ by deleting the unique element at level $i$ of $P$.
\item[\pa3] If $|L_i| = 1$ and $i < \level(P)$ then perform the following modifications to $P$ to form $\remove(P)$.   
Let $\mathcal{M} = D_{i+1}(P) \setminus D_i(P)$.  Make each element of $\mathcal{M}$ a maximal element by deleting all relations $x \prec y$ with $x \in \mathcal{M}$.  Then remove the unique element that was at level $i$ of $P$. 
\end{enumerate}
Now recursively define $P^{(n)} = P$, and $P^{(k)} = \remove(P^{(k+1)})$ for $k \in [1,n)$.  Let $a_k = \minlevel(P^{(k)})$ for $i \in [1,n]$ and define $\bpa(P) = (a_1, \ldots, a_n)$, which is an ascent sequence~\cite{bcdk}.  Note that in following this recursive procedure, the sequence $(a_1, \ldots, a_n)$ is constructed from right-to-left.
\end{definition}  

\subsubsection{Matrices and posets}  The bijections between matrices and posets are actually simpler than those involving ascent sequences.  To provide definitions for $\bmp$ and $\bpm$, it is is easier to work with labelled posets and remove the labels after the construction, as is done in~\cite{djk}. We begin with $\bpm$. 

\begin{definition}\label{def:bpm}
Let $P$ be a labelled (2+2)-free poset on the set $\{p_1,\ldots,p_n\}$.  The poset is uniquely specified by the sets $D(P)$ and $L(P)$ defined in Subsection~\ref{sub:classicalsets} since every element of $L_i$ has the set $D_i$ as its strict downset.

For $j \in [0, \level(P)]$, let $K_j(P)=D_{j+1}(P)\backslash D_j(P)$ where $D_{\level(P)+1} \coloneqq P$.
Let $M'$ be the matrix with entries $M'_{ij}=L_{i-1}(P)\cap K_{j-1}(P)$ for all $i,j \in [1,\level(P)+1]$, and define $\bpm(P)$ to be the matrix whose $(i,j)$ entry is $|M'_{ij}|$.
\end{definition}

\begin{example}
Let $P$ be the (2+2)-free poset from the top-left of Figure~\ref{fig:examples} where
\begin{align*}
D(P) &= (\emptyset,\{p_1,p_2\},\{p_1,p_2,p_5\},\{p_1,p_2,p_3,p_5\}) \\
L(P) &= (\{p_1,p_2,p_5\},\{p_6\},\{p_3\},\{p_4\})
\end{align*}
We have $\level(P)=3$ and 
$$K(P)=(K_0(P),K_1(P),K_2(P),K_3(P))=(\{p_1,p_2\},\{p_5\},\{p_3\},\{p_4,p_6\}).$$
From this
\begin{equation}\label{equ:setmatrix}
M' = \begin{pmatrix}
\{p_1,p_2\} & \{p_5\} & \emptyset & \emptyset \\
\emptyset & \emptyset & \emptyset & \{p_6\} \\
\emptyset & \emptyset & \{p_3\} & \emptyset \\
\emptyset & \emptyset & \emptyset & \{p_4\}  
\end{pmatrix}
\end{equation}
and so $\bpm(P)$ is the matrix $M$ in the bottom-left of Figure~\ref{fig:examples}.
\end{example}

To define $\bmp$ by inverting $\bpm$, the first step is to construct the set-valued matrix as in \eqref{equ:setmatrix}, but it is not clear which of the entries $\{p_1, \ldots, p_n\}$ should go where.  The key insight is that it doesn't really matter because $\bmp(M)$ will ultimately be an \emph{unlabelled} poset.  

\begin{definition}\label{def:bmp}
Let $M \in \Matrices_n$ and let $P'=\{p_1,\ldots,p_n\}$.
Replace each of the zero entries in $M$ with the empty set, and replace any non-zero integer $a$ in $M$ with a subset of $P'$ of size $a$ while ensuring there are no duplicated elements in the matrix.
Given  $p_i \in P'$, let $c(p_i)$ and $r(p_i)$ be the column  and row indices of $p_i$\,.
Let $P=(P',\preceq)$ be the poset whereby $p_i \prec p_j$ if and only if $c(p_i)< r(p_j)$.
Finally let $\bmp(M)$ be the unlabelled version of the poset $P$.\qed
\end{definition}

\begin{example}
Again using $M$ from Figure~\ref{fig:examples}, suppose we assign $\{p_1, \ldots, p_6\}$ as follows:
\[
M' = \begin{pmatrix} \{p_1,p_2\} & \{p_3\} & \emptyset & \emptyset \\
\emptyset & \emptyset & \emptyset & \{p_4\} \\
\emptyset & \emptyset & \{p_5\} & \emptyset \\
\emptyset & \emptyset & \emptyset & \{p_6\}\end{pmatrix}.
\]
This gives the strict order relations $p_1,p_2 \prec p_4,p_5,p_6$ and $p_3 \prec p_5,p_6$ and $p_5 \prec p_6$.
This is the same poset as in the top left of Figure~\ref{fig:examples} once all labels are removed.
\end{example}

\subsubsection{Ascent sequences to permutations}

For an ascent sequence $\A$, we follow~\cite{bcdk}, which used a technique systemised in~\cite{Wes}, to recursively construct a permutation $\bas(\A) \in \sn(2|3\overline{1})$. 

Given a  $2|3\overline{1}$-avoiding permutation $\pi \in \mathcal{S}_{n-1}$, we wish to add the entry $n$ to $\pi$ and get another $2|3\overline{1}$-avoiding permutation.  With this in mind, we say that the site between positions $i$ and $i+1$ in $\pi$ is \emph{active} if $\pi_i=1$ or $\pi_i-1$ is to the left of $\pi_i$.  The sites immediately before position 1 and immediately after position $n-1$ are always active.  We leave it as an exercise to check that the active sites are exactly those where inserting $n$ into $\pi$ will result in an element of $\sn(2|3\overline{1})$.  We label the active sites as subscripts from left to right beginning with 0.  For example, the permutation 521634 becomes $_0521_16_23_34_4$.

Recall that for an ascent sequence $\A = (a_1, \ldots, a_n)$, the prefix $(a_1, \ldots, a_k)$ is denoted $\A^{(k)}$.

\begin{definition}\label{def:bas}
 Given $\A = (a_1, \ldots, a_n) \in \Asc_n$, we define $\bas(\A)$ recursively.  First, $\bas(\A^{(1)}) = 1$, a permutation of one element.  For $k \in (1,n]$, we let $\bas(\A^{(k)})$ be $\bas(\A^{(k-1)})$ with $k$ inserted at the active site labelled $a_k$.  Then $\bas(\A)$ is defined to be $\bas(\A^{(n)})$.
\end{definition}

We leave it as an exercise to check that there will indeed exist an active site labelled $a_k$.  

\begin{example}
Let $\A = (0,0,1,2,0,1)$ as in Figure~\ref{fig:examples}.  We get the following sequence of permutations, labelled according to their active sites, with the arrows labelled by $a_k$:
\[
_01_1\ \xrightarrow{\;0\;}\ _021_1\ \xrightarrow{\;1\;}\ _021_13_2\ \xrightarrow{\;2\;}\ _021_13_24_3\ \xrightarrow{\;0\;}\ _0521_13_24_3\ \xrightarrow{\;1\;}\ _0521_16_23_34_4\ ,
\]
consistent with Figure~\ref{fig:examples}

\end{example}

\subsubsection{Relationships among the bijections}

Since the bijections between matrices and posets are very different from those involving ascent sequences, it is not at all clear that Figure~\ref{fig:summary} is a commutative diagram at the level of $\Posets$, $\Asc$ and $\Matrices$.  Since our maps are all bijections, this is equivalent to showing that $\bpm = \bam \circ \bpa$, the truth of which is given as a remark in \cite{djk}.  The equivalent statement $\bpm \circ \bap = \bam$ can be checked using a careful induction argument.  The crux of the argument is that the ways in which \ap1, \ap2 and \ap3 from Definition~\ref{def:bap} change the matrix $\bpm(P^{(k)})$ to $\bpm(P^{(k+1)})$ match exactly with the effects of \am1, \am2 and \am3 respectively from Definition~\ref{def:bam}.  In Theorem~\ref{thm:ascent_flip}, we will use the equivalent statement that $\bpa = \bma \circ \bpm$.

We will also need a bijection $\bps$ (see Subsection~\ref{sub:perm_dual}), and we will define it by $\bps = \bas \circ \bpa$.  A direct definition of $\bps$ that bypasses ascent sequences appears in \cite[Sub.~4.2]{bcdk} but we choose not to include that background here since we do not make heavy use of $\bps$.  
 
\section{Restricted sets}

As we saw in the previous section, the bijection $\bam$ (resp.\ $\bap$) becomes more complicated when the case \am3 (resp.\ \ap3) arises.  In this section, we introduce the subset of $\Asc$ for which these cases never arise, denoted $\RAsc$, where the letter $\R$ stands for ``restricted.''  There are two reasons for restricting our attention.  
First, as we will show, the images of this subset under our various bijections have nice definitions, and some of these images are combinatorially significant. Secondly, the bijections' behaviour becomes more tractable, allowing us to obtain results for these restricted sets, as we will see in particular in Section~\ref{sec:duality}.

We first must determine the conditions on an ascent sequence $\A$ that cause these two troublesome cases to arise.  Referring to Definition~\ref{def:bam}, we need to avoid the situation when
\begin{equation}\label{equ:rasc_matrices}
\a_{k+1} \in [\mindex(M^{(k)}),\dim(M^{(k)})). 
\end{equation}
Let us translate this expression into properties of $\A$.
Looking at all three cases in this definition, we see that $\mindex(M^{(k+1)})$ is nothing more than $a_{k+1}+1$, so $\mindex(M^{(k)}) = a_k+1$.  In addition, this observation implies that \am1 corresponds to the case when $a_k$ is not an ascent in $\A$.  Moreover, observe that when \am1 applies, the dimension of the matrix does not change, but it increases by 1 for each application of \am2 or \am3.  Therefore, $\dim(M^{(k)})$ is exactly 
$\asc(\a_1,\ldots,\a_{k})+1$.  Thus the condition \eqref{equ:rasc_matrices} that invokes \am3 when constructing $M^{(k+1)}$ is equivalent to
\begin{equation}\label{equ:am3}
\a_{k+1} \in (a_k, \asc(\a_1,\ldots,\a_{k})].
\end{equation}
A similar analysis of Definition~\ref{def:bap} shows that~\eqref{equ:am3} is also exactly the condition that causes \ap3 to be used.  Thus we are led to the following definition of a subset of $\Asc$, which was introduced in \cite{bcdk} under the name ``self-modified'' ascent sequences.

\begin{definition}
Let $\RAsc_n$ be the subset of $\Asc$ consisting of those ascent sequences $\A=(\a_1,\ldots,\a_n)$ such that
\begin{equation}\label{equ:rasc_original}
\a_k \in [0,\a_{k-1}] \cup \{1+\asc(\a_1,\ldots,\a_{k-1})\} \mbox{\ \ for all $k>1$}.
\end{equation}
\end{definition}

In other words, if $\a_k$ is larger than $a_{k-1}$, then $\a_k$ must be the largest it can be under the conditions on an ascent sequence.  
For example, $(0,1,0,2)$ is in $\RAsc$ whereas $(0,1,0,1)$ is not.  
Taking these ideas a step further, we get the following equivalent definition of $\RAsc$, which will be useful later.

\begin{lemma}
$\RAsc_n$ is the subset of $\Asc$ consisting of those ascent sequences $\A=(\a_1,\ldots,\a_n)$ such that
\begin{equation}\label{equ:rasc}
\a_k \in [0,\a_{k-1}] \cup \{1+\max(\a_1,\ldots,\a_{k-1})\} \mbox{\ \ for all $k>1$}.
\end{equation}
\end{lemma}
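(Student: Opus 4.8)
The plan is to prove the stronger statement that for any sequence satisfying the restriction, the quantities $\asc$ and $\max$ agree on every prefix, from which the equivalence of \eqref{equ:rasc_original} and \eqref{equ:rasc} is immediate. Indeed, the two conditions differ only in whether the isolated largest allowed value is $1+\asc(\a_1,\ldots,\a_{k-1})$ or $1+\max(\a_1,\ldots,\a_{k-1})$, while the interval $[0,\a_{k-1}]$ is common to both. So once I know that $\asc(\a_1,\ldots,\a_{k-1})=\max(\a_1,\ldots,\a_{k-1})$ for the relevant prefixes, the two membership conditions become literally identical term by term.

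To make this rigorous while avoiding circularity (the very notion being characterized is defined by one of the two conditions), I would argue by induction on the length $m$, proving simultaneously: (a) a length-$m$ sequence satisfies \eqref{equ:rasc_original} for all $1<k\le m$ if and only if it satisfies \eqref{equ:rasc} for all $1<k\le m$; and (b) whenever it does, $\asc(\a_1,\ldots,\a_m)=\max(\a_1,\ldots,\a_m)$. The base case $m=1$ is trivial, since both conditions are vacuous and $\a_1=0$ gives $\asc=\max=0$.

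For the inductive step, observe that each condition for a length-$m$ sequence decomposes as the same condition on the prefix $(\a_1,\ldots,\a_{m-1})$ together with the single constraint at $k=m$. Part (a) of the inductive hypothesis handles the prefixes, and part (b) supplies $\asc(\a_1,\ldots,\a_{m-1})=\max(\a_1,\ldots,\a_{m-1})$ whenever the prefix is admissible; this makes the two $k=m$ constraints coincide, giving (a) at length $m$. (If the prefix fails the condition, both length-$m$ conditions fail, and they are again equivalent.) For (b) I would run the short case analysis on $\a_m$: if $\a_m\in[0,\a_{m-1}]$ then no new ascent is created and $\a_m$ does not exceed the running maximum, so neither $\asc$ nor $\max$ changes from the prefix; whereas if $\a_m=1+\max(\a_1,\ldots,\a_{m-1})$, then $\a_m$ strictly exceeds every earlier entry, so exactly one new ascent is created and $\a_m$ becomes the new maximum, and both quantities increase by one. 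Either way the equality is preserved.

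The only genuine subtlety — and the step I would watch most carefully — is the mild chicken-and-egg between (a) and (b): one cannot assert $\asc=\max$ for arbitrary ascent sequences (for instance $(0,1,0,1)$ has $\max=1$ but $\asc=2$), so the equality truly depends on the restriction, and the induction must carry (a) and (b) together rather than establishing either in isolation. Everything beyond this is routine bookkeeping about how appending one entry affects the ascent count and the running maximum.
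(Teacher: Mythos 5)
Your proof is correct and rests on the same key invariant as the paper's: for sequences satisfying the restriction, $\asc$ and $\max$ agree on every prefix, both quantities jumping by exactly one at each ascent and being unchanged otherwise. The paper organizes this as an induction on the number of ascents rather than your induction on length (and is less explicit about the simultaneous two-part statement and the bidirectionality), but the substance is the same.
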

\begin{proof}
We argue by induction on the number of ascents.  If the first ascent is at position $i_1$ so that $0 = a_{i_1} < a_{i_1+1}$ then we have 
\[
a_{i_1+1} = 1 + \asc(\a_1,\ldots,\a_{i_1}) = 1 + \max(\a_1,\ldots,\a_{i_1}) = 1.
\]
With the $j$th ascent at position $i_j$, we assume
\[
a_{i_j+1} = 1 + \asc(\a_1,\ldots,\a_{i_j}) = 1 + \max(\a_1,\ldots,\a_{i_j}).
\]
Thus at the $(j+1)$st ascent, we have (note the subtle difference below between the indices $i_{j+1}$ and $i_j+1$)
\begin{eqnarray*}
a_{i_{j+1}+1}  & = & 1 + \asc(\a_1,\ldots,\a_{i_{j+1}}) \\
& =  & 1 + \asc(\a_1,\ldots,\a_{i_j+1}) \\\
& = & 1 + \asc(\a_1,\ldots,\a_{i_j}) + 1 \\
& = & 1 + \max(\a_1,\ldots,\a_{i_j}) + 1 \\
& = & \max(\a_1,\ldots,\a_{i_j+1}) + 1 \\
& = & \max(\a_1,\ldots,\a_{i_{j+1}})+1.
\end{eqnarray*}
\end{proof}

The rest of this section will be working towards proving Corollary~\ref{cor:restricted_bijections}, which states that, under the classical bijections defined in the previous section, $\RAsc_n$ maps to the sets we now define.

\begin{definition} \
\begin{itemize}
\item
Let $\RMatrices_n$ be the set of matrices in $\Matrices_n$ all of whose diagonal entries are positive.
\item 
Let $\RPosets_n$ be the set of those posets in $\Posets_n$ that have a chain of length $\level(P)$.  
\item 
Let $\RPerms_n  = \sn(3\overline{1}52\overline{4})$ (defined next).
\end{itemize}
\end{definition}

The set $\sn(3\overline{1}52\overline{4})$ is enumerated in \cite{Pud, Pud2} (see also \cite[\href{https://oeis.org/A098569}{A098569}]{oeis}) and appears in \cite{bcdk} as the image $\bas(\RAsc_n)$.  
A permutation $\pi$ is said to avoid the barred permutation $3\overline{1}52\overline{4}$ if every occurrence of the pattern 231 in $\pi$ plays the role of 352 in an occurrence of the pattern 31524 in $\pi$.  In other words, if we have $i<j<k$ with $\pi_k < \pi_i < \pi_j$\,, there must also exist $\ell$ and $m$ such that $i < \ell < j < k < m$ and $\pi_i \pi_\ell \pi_j \pi_k \pi_m$ is an occurrence of 31524.

Since a poset $P$ in $\Posets$ has $\level(P)+1$ levels, the maximal possible length of a chain is $\level(P)$, so $\RPosets_n$ consists of those posets that have a chain that contains an element from every level.    

The following observation will be crucial for several results stated in this paper.  It not only gives the image of an element of $\RMatrices$ under $\bma$ but, combined with Corollary~\ref{cor:restricted_bijections}(a), shows that all elements of $\RAsc$ take a particular form.  We use $i^j$ to denote a sequence of $j$ copies of $i$.  From this point on, we will denote the entry in row $i$ and column $j$ of the matrix $M$ by $m_{ij}$ or $m_{i,j}$

\begin{lemma}\label{rmattorasc}
Suppose that $M \in \RMatrices$ with $\dim(M)=d$.  Then 
\begin{equation}\label{equ:asc2matrix}
\bma(M) = (0^{m_{11}},1^{m_{22}},0^{m_{12}}, 2^{m_{33}},1^{m_{23}},0^{m_{13}},\ldots,(d-1)^{m_{dd}},(d-2)^{m_{d-1\,d}},\ldots,0^{m_{1d}}).
\end{equation}
\end{lemma}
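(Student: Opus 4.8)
The plan is to verify the formula by applying the inverse map $\bam$ to the right-hand side of \eqref{equ:asc2matrix} and checking that we recover $M$; since $\bma = \bam^{-1}$, this suffices. Write $\mathbf{s}$ for the candidate sequence on the right of \eqref{equ:asc2matrix}, and group its entries into $d$ consecutive blocks $B_1,\ldots,B_d$, where $B_c = ((c-1)^{m_{cc}}, (c-2)^{m_{c-1,c}}, \ldots, 0^{m_{1c}})$ records the entries of column $c$ of $M$. Since every diagonal entry $m_{cc}$ is positive, each block $B_c$ is nonempty and begins with the value $c-1$.

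First I would record the coarse structure of $\mathbf{s}$. Within any single block the values weakly decrease, so no ascent occurs inside a block; at each junction $B_{c-1} \to B_c$ the value rises from something at most $c-2$ to $c-1$, which is an ascent. Hence the prefix $(B_1,\ldots,B_{c-1})$ contains exactly $c-2$ ascents, so the opening value $c-1$ of $B_c$ equals one more than the number of preceding ascents; this simultaneously confirms that $\mathbf{s}$ is a genuine ascent sequence (in fact one lying in $\RAsc$). Using the identities $\mindex(M^{(k)}) = a_k + 1$ and $\dim(M^{(k)}) = \asc(\a_1,\ldots,\a_k)+1$ noted in the discussion preceding \eqref{equ:am3}, it follows that immediately before $B_c$ is processed the running matrix has dimension $c-1$ and that the value of $\mindex$ always sits one above the most recently read entry.

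The heart of the argument is an induction on $c$ showing that $\bam$ applied to the prefix $(B_1,\ldots,B_c)$ produces exactly the top-left $c \times c$ submatrix of $M$, and that only cases \am1 and \am2 ever occur. For the inductive step, the first entry of $B_c$ has value $c-1 = \dim(M^{(k)})$, so \am2 fires, appending a new row and column and placing a $1$ in position $(c,c)$. For every later entry $v$ of $B_c$, writing $v'$ for the preceding value we have $v \leq v'$ while $\mindex$ of the current matrix equals $v'+1$; thus $v \leq v' < v'+1 = \mindex \leq \dim$, so $v \in [0,\mindex)$ and \am1 fires, incrementing the entry in position $(v+1,c)$. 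Counting multiplicities, the $m_{cc}$ copies of $c-1$ build the diagonal entry up to $m_{cc}$, and the $m_{ic}$ copies of $i-1$ (for $i<c$) fill position $(i,c)$ with $m_{ic}$, exactly reconstructing column $c$. Crucially, the inequality $v < \mindex$ means the range $[\mindex,\dim)$ is never entered, so \am3 is never invoked.

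The main obstacle is not conceptual but bookkeeping: one must keep the twin invariants $\mindex = a_k+1$ and ``running matrix $=$ top-left submatrix of $M$'' aligned through the block transitions, taking care that the lone \am2 step at the start of each block is correctly separated from the \am1 steps that follow, and that the base block $B_1$ (where $M^{(1)}=(1)$ already supplies the first $0$) is handled as the initial case rather than via \am2. An alternative is to run Definition~\ref{def:bma} directly, peeling entries off $M$ from the last column via \ma1 and \ma2; this reaches the same conclusion but is messier because the $\reduce$ cases are governed by row sums rather than by the single value $a_{k+1}$, so I would present the $\bam$ computation as the cleaner route.
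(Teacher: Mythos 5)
Your proof is correct, but it runs in the opposite direction from the paper's. The paper proves the lemma by executing Definition~\ref{def:bma} directly on $M$: one peels the last column off from bottom of the sequence upward, observing that the hypothesis that every diagonal entry is positive forces every intermediate row sum at $\mindex$ to exceed $1$ whenever $\mindex < \dim$, so that only \ma1 and \ma2 ever fire and the recorded values $\mindex(M^{(k)})-1$ come out in exactly the order displayed in \eqref{equ:asc2matrix}. You instead take the candidate sequence, apply the forward map $\bam$ to it block by block, and check that it rebuilds $M$, invoking $\bma = \bam^{-1}$ at the end; the case analysis you need is that \am3 never fires, which you derive from the invariant $\mindex(M^{(k)}) = a_k+1$ and the weak decrease within each block. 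Both arguments are sound and of comparable length. The paper's route is more economical here because the obstruction to \ma3 (a row of sum $1$ off the diagonal) is visible at a glance from the positivity of the diagonal, whereas your route must first establish that the displayed sequence is a valid ascent sequence before $\bam$ can be applied to it. On the other hand, your computation yields as a free by-product that the sequence lies in $\RAsc$ and that its image under $\bam$ is $M$, which is essentially the content of Corollary~\ref{cor:restricted_bijections}(a) restricted to this situation, so your version packages slightly more information. Your closing remark that the direct $\reduce$ computation is ``messier'' is the one place I would push back: it is in fact the paper's chosen proof, and the row-sum conditions governing \ma1--\ma3 collapse immediately under the positive-diagonal hypothesis.
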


\begin{proof}
This follows from the definition of $\bma$ by considering how $\A = \bma(M)$ is constructed from right-to-left in Definition~\ref{def:bma}.  Since the diagonal entries of $M$ are all positive, we never have to apply \ma3.  
\end{proof} 

\begin{example}
The matrix $M = \left(\begin{smallmatrix} 2&0&1 \\ 0&1&0 \\ 0&0&1\end{smallmatrix}\right)$ is in bijection with the ascent sequence $\A = (0,0,1,2,0)$.
\end{example}

The next lemma shows the essential condition for the desired bijection between $\RAsc_n$ and $\RMatrices_n$.  

\begin{prop}\label{pro:diag_zeros}
Let $\A=(\a_1,\ldots,\a_n) \in \Asc_n$ and $M=\bam(\A)$ with $\dim(M)=d$.
There will be a zero on the diagonal of $M$ if and only if there exists $i\in [1,n-1]$ such that $\a_i<\a_{i+1}\leq \asc(\a_1,\ldots,\a_i)$.
\end{prop}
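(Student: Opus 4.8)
The plan is to reduce the statement to counting how often case \am3 is invoked, and then to track the number of zeros on the diagonal step-by-step through the recursion of Definition~\ref{def:bam}. As established in the discussion preceding this proposition (the condition \eqref{equ:rasc_matrices} that invokes \am3 is equivalent to \eqref{equ:am3}), case \am3 is used when passing from $M^{(k)}$ to $M^{(k+1)}$ precisely when $\a_k < \a_{k+1} \le \asc(\a_1,\ldots,\a_k)$. Hence the existence of an index $i\in[1,n-1]$ with $\a_i<\a_{i+1}\le\asc(\a_1,\ldots,\a_i)$ is exactly equivalent to \am3 being used at least once during the construction of $M=M^{(n)}$. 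It therefore suffices to prove that $M$ has a zero on its diagonal if and only if \am3 is invoked at some step. Writing $z_k$ for the number of zeros on the diagonal of $M^{(k)}$, I would show that $z_{k+1}=z_k$ whenever \am1 or \am2 is used and $z_{k+1}=z_k+1$ whenever \am3 is used; since $M^{(1)}=(1)$ gives $z_1=0$, this yields $z_n = \#\{k : \text{\am3 is used at step } k\}$, and the claim follows at once.

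First I would dispose of the two easy cases. Case \am2 appends a new final row and column and places a $1$ in the new diagonal position while leaving every earlier entry in place, so it adds a \emph{positive} diagonal entry and $z$ is unchanged. Case \am1 increases by one the single entry in position $(\a_{k+1}+1,\dim(M^{(k)}))$ and alters nothing else. Increasing an entry can never produce a zero, so $z$ cannot rise; and the only way $z$ could fall would be for this entry to be a diagonal zero, i.e.\ for $\a_{k+1}+1=\dim(M^{(k)})$. But \am1 requires $\a_{k+1}<\mindex(M^{(k)})\le\dim(M^{(k)})$, so $\a_{k+1}+1=\dim(M^{(k)})$ would force $\mindex(M^{(k)})=\dim(M^{(k)})$. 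By the definition of $\mindex$ this says that the last column of $M^{(k)}$ has no nonzero entry above the diagonal; as $M^{(k)}$ has no zero column, its bottom-right diagonal entry is then nonzero. Thus the entry touched by \am1 is already positive, $z$ does not fall, and \am1 leaves $z$ unchanged.

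The crux — and where I expect the real work to lie — is the bookkeeping for \am3. Set $p=\a_{k+1}+1$; case \am3 inserts a new empty row and a new empty column, both at index $p$ (between old indices $\a_{k+1}$ and $\a_{k+1}+1$). This simultaneous insertion carries each old diagonal entry $m_{ii}$ to position $(i,i)$ when $i<p$ and to $(i+1,i+1)$ when $i\ge p$, so every pre-existing diagonal entry stays on the diagonal with its value preserved. To evaluate the new diagonal position $(p,p)$, note that the new row is zero except for a $1$ in the last column, at the off-diagonal position $(p,\dim(M^{(k)})+1)$, and that the new column receives only the entries relocated leftward out of the old last column; these originate in rows of index at most $\a_{k+1}<p$ (off-diagonal positions, since $\a_{k+1}<\dim(M^{(k)})$, so zeroing their former slots does not disturb the diagonal) and land in positions $(i,p)$ with $i<p$, strictly above the diagonal. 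Consequently every entry of the new column in rows $\ge p$ is zero — in particular the diagonal entry $(p,p)$ — so \am3 introduces exactly one new diagonal zero while preserving all previous diagonal entries, giving $z_{k+1}=z_k+1$. The only genuine obstacle is this careful tracking of positions under the row-and-column insertion of \am3; once it is in hand, the equivalence is immediate.
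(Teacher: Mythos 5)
Your proof is correct and follows essentially the same route as the paper's: a case analysis of how \am1, \am2 and \am3 act on the diagonal, with \am3 the only case that creates a diagonal zero and no case ever destroying one. Your version is slightly more quantitative (the number of diagonal zeros equals the number of \am3 steps) and more explicit in the positional bookkeeping for \am3, but the substance is the same.
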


\begin{proof}
To show this we will consider Definition~\ref{def:bam} and how the matrix entries change during 
construction with respect to the rules \am1, \am2 and \am3.

Suppose that there exists $i\in [1,n-1]$ with $\a_i<\a_{i+1}\leq \asc(\a_1,\ldots,\a_i)$.  
Recall from the second paragraph of this section that $\a_i = \mindex(M^{(i)})-1$ and $\asc(a_1,\ldots,a_i) = \dim(M^{(i)})-1$.
So when constructing $M^{(i+1)}$ from $M^{(i)}$, \am3 applies, and 
$M^{(i+1)}_{1+\a_{i+1},1+\a_{i+1}} =0$. This entry is not in the rightmost column of the matrix 
and cannot therefore be increased by any subsequent applications of \am1 or \am2. 
Depending on the subsequent values in the ascent sequence, if \am3 is used then the 0 at position $(1+\a_{i+1},1+\a_{i+1})$ may be permuted 
amongst the diagonal entries but will never again be in the rightmost column, and therefore never again accessible to change.

For the converse, suppose that for all $i\in [1,n-1]$,
\begin{equation}\label{equ:rasc_inequalities}
\a_{i+1} \in [0,\a_i] \cup \{1+\asc(\a_1,\ldots,\a_i)\}.
\end{equation}
The procedure for constructing $M$ will begin with $M^{(1)} = (1)$, a 1-by-1 matrix.  Because of~\eqref{equ:rasc_inequalities}, only \am1 and \am2 will be needed to construct the subsequent $M^{(i)}$.  Since \am1 and \am2 both preserve the property of all diagonal entries being strictly positive, $M$ will also have that desired property.
\end{proof}

We next provide a similarly essential ingredient for the desired bijection between $\RPosets$ and $\RMatrices$.

\begin{prop}\label{pro:rposets_to_rmatrices}
Let $P \in \Posets_n$ and let $M = \bpm(P)$ with $dim(M) = \level(P)+1$.  All the entries on the diagonal of $M$ will be non-zero if and only if $P$ has a chain of length $\level(P)$.
\end{prop}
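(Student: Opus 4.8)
The plan is to translate the positivity of the diagonal of $M=\bpm(P)$ into a purely order-theoretic condition on the levels of $P$, and then match that condition to the existence of a long chain. First I would read off the diagonal entries directly from Definition~\ref{def:bpm}: for each level $i \in [0,\level(P)]$ the corresponding diagonal entry is $m_{i+1,i+1} = |L_i(P) \cap K_i(P)|$, where $K_i = D_{i+1}\setminus D_i$ and $D_{\level(P)+1}=P$. Because no element lies in its own strict downset, every $x \in L_i$ satisfies $x \notin D_i = D(x)$, so $L_i \cap D_i = \emptyset$; hence $L_i \cap K_i = L_i \cap D_{i+1}$ for $i < \level(P)$, while $L_{\level(P)}\cap K_{\level(P)} = L_{\level(P)}$. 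In particular the final diagonal entry equals $|L_{\level(P)}| \geq 1$ and is always positive, so the whole proposition reduces to the claim that, for every $i \in [0,\level(P)-1]$, there is an element of level $i$ lying in $D_{i+1}$.

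Next I would record two elementary facts. The first is immediate from the definition of levels: all elements of $L_i$ share the common strict downset $D_i$. The second is that $x \prec y$ forces $D(x) \subsetneq D(y)$, since transitivity gives $D(x) \subseteq D(y)$ and $x \in D(y)\setminus D(x)$; consequently the level of $x$ is strictly less than that of $y$. It follows that any chain of $P$ meets strictly increasing levels, so a chain of length $\level(P)$ must contain exactly one element from each of the $\level(P)+1$ levels.

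For the forward direction, suppose $P$ has a chain $x_0 \prec x_1 \prec \cdots \prec x_{\level(P)}$ with $x_i$ at level $i$. Since $x_{i+1}$ lies at level $i+1$, its strict downset is $D_{i+1}$, and $x_i \prec x_{i+1}$ gives $x_i \in D_{i+1}$; thus $x_i \in L_i \cap D_{i+1}$ and every diagonal entry is positive. For the converse, assume all diagonal entries are positive. For each $i \in [0,\level(P)-1]$ choose $x_i \in L_i \cap D_{i+1}$, and choose any $x_{\level(P)} \in L_{\level(P)}$. The crucial step is that $x_i \in D_{i+1}$, combined with the first recorded fact that $D_{i+1}$ is the common strict downset of all level-$(i+1)$ elements (in particular of $x_{i+1}$), yields $x_i \prec x_{i+1}$. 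By transitivity the $x_i$ form a chain, and since they lie at distinct levels they are distinct, giving a chain of $\level(P)+1$ elements, i.e.\ of length $\level(P)$.

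I expect no serious obstacle here: the argument is short once the diagonal entries are computed. The only points requiring care are the bookkeeping in the first step --- correctly identifying $m_{i+1,i+1}$ and treating the top level separately, so that the trivially-positive entry $|L_{\level(P)}|$ does not pollute the equivalence --- and the observation that elements sharing a level share a downset, which is exactly what upgrades the membership $x_i \in D_{i+1}$ into the order relation $x_i \prec x_{i+1}$ needed to splice the chosen witnesses into a single chain.
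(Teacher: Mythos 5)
Your proof is correct and follows essentially the same route as the paper's: both identify the diagonal entries of $\bpm(P)$ with the sets $L_{i}\cap(D_{i+1}\setminus D_{i})$ and splice the resulting witnesses into a chain using the fact that $D_{i+1}$ is the common strict downset of all level-$(i+1)$ elements. The only cosmetic difference is in the direction ``chain $\Rightarrow$ positive diagonal'': you argue directly in the poset via the observation that levels strictly increase along a chain, whereas the paper routes this through the matrix-to-poset construction, noting that a chain of length $\level(P)$ forces $\level(P)+1$ nonempty entries of $M'$ arranged strictly southeast of one another, hence all on the diagonal.
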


\begin{proof}
Consider the construction of $M$ from $P$ as given in Definition~\ref{def:bpm}, especially the intermediate matrix $M'$ given by 
\[
M'_{ij} = L_{i-1} \cap (D_{j}\backslash D_{j-1})
\]
for all $i,j \in [1,\level(P)+1]$.  Suppose $m_{ii} \neq 0$ for all $i \in [1,\level(P)+1]$. We have that $m_{ii} \neq 0$ if and only if $M'_{ii} \neq \emptyset$, which
is equivalent to 
\begin{equation}\label{equ:nonempty_intersection}
L_{i-1} \cap (D_{i}\backslash D_{i-1}) \neq \emptyset.
\end{equation}
In other words, there exists at least one element $w_i \in L_{i-1} \cap (D_{i}\backslash D_{i-1})$ for all $1\leq i \leq \level(P)+1$.  Therefore,
\begin{equation}\label{equ:chain}
w_1 \prec_P w_2 \prec_P \cdots \prec_P w_{\level(P)},
\end{equation}
and so $P$ has a chain of length $\level(P)$.  

To show that if $P$ has a chain of length $\level(P)$ then $M$ has only non-zero diagonal entries, it suffices to show that the condition in~\eqref{equ:chain} implies that in~\eqref{equ:nonempty_intersection} for all $i$.  So consider the construction of $P$ from $M$, as given in Definition~\ref{def:bmp}.  A chain in $P$ of length $\level(P)$ must arise from a sequence of $\level(P)+1$ nonempty entries in $M'$, each strictly southeast of the previous one.  
But  since $\dim(M')=\level(P)+1$ by definition of $\bpm$, these nonempty entires in $M'$ must all be along the diagonal, yielding~\eqref{equ:nonempty_intersection} for all $i$.
\end{proof}

Combining the previous two propositions gives the main result of this section, which shows that $\RAsc$, $\RMatrices$, $\RPosets$ and $\RPerms$ are all in bijection, and the bijections we need are exactly the restricted versions of the classical ones.  Part (c) of the theorem was already proved as~\cite[Prop.~10]{bcdk}

\begin{corollary}\label{cor:restricted_bijections}  The classical bijections among $\Asc_n$, $\Matrices_n$, $\Posets_n$ and $\Perms_n$ restrict to bijections among $\RAsc_n$, $\RMatrices_n$, $\RPosets_n$ and $\RPerms_n$, Specifically, 
\begin{enumerate}
\item $\bam$ maps $\RAsc_n$ bijectively to $\RMatrices_n$;
\item $\bpm$ maps $\RPosets_n$ bijectively to $\RMatrices_n$;
\item \cite{bcdk} $\bas$ maps $\RAsc_n$ bijectively to $\RPerms_n$.
\end{enumerate}
\end{corollary}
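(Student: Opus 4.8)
The plan is to deduce all three statements from the two preceding propositions together with the cited result, in each case exhibiting the claimed map as the restriction of an already-established bijection to the full preimage of the target subset. For this I will repeatedly use the elementary fact that if $g\colon X\to Y$ is a bijection and $Y'\subseteq Y$, then $g$ restricts to a bijection from $g^{-1}(Y')$ onto $Y'$; so it suffices in each case to identify the preimage of the restricted target with the restricted source.

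For part (a), the plan is to use Proposition~\ref{pro:diag_zeros} to compute $\bam^{-1}(\RMatrices_n)$. Fix $\A=(\a_1,\ldots,\a_n)\in\Asc_n$. By that proposition, $\bam(\A)$ has a zero on its diagonal if and only if there is some $i\in[1,n-1]$ with $\a_i<\a_{i+1}\le\asc(\a_1,\ldots,\a_i)$; hence $\bam(\A)\in\RMatrices_n$ if and only if for every $i\in[1,n-1]$ one has $\a_{i+1}\le\a_i$ or $\a_{i+1}>\asc(\a_1,\ldots,\a_i)$. Because $\A$ is an ascent sequence we always have $\a_{i+1}\le 1+\asc(\a_1,\ldots,\a_i)$, so the strict inequality $\a_{i+1}>\asc(\a_1,\ldots,\a_i)$ can hold only with equality $\a_{i+1}=1+\asc(\a_1,\ldots,\a_i)$. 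Thus $\bam(\A)\in\RMatrices_n$ exactly when $\a_{i+1}\in[0,\a_i]\cup\{1+\asc(\a_1,\ldots,\a_i)\}$ for all $i$, which is precisely the defining condition~\eqref{equ:rasc_original} of $\RAsc_n$. Therefore $\bam^{-1}(\RMatrices_n)=\RAsc_n$, and $\bam$ restricts to a bijection $\RAsc_n\to\RMatrices_n$.

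Part (b) follows the same template but is immediate, since Proposition~\ref{pro:rposets_to_rmatrices} is already phrased in the needed language: for $P\in\Posets_n$ it asserts that $\bpm(P)$ has all diagonal entries non-zero if and only if $P$ has a chain of length $\level(P)$, that is, if and only if $P\in\RPosets_n$. Hence $\bpm^{-1}(\RMatrices_n)=\RPosets_n$ and $\bpm$ restricts to a bijection $\RPosets_n\to\RMatrices_n$. Part (c) requires nothing new: it is exactly \cite[Prop.~10]{bcdk}, which identifies $\bas(\RAsc_n)$ with $\sn(3\overline{1}52\overline{4})=\RPerms_n$.

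Finally, I would remark that the overarching assertion — that the whole network of classical bijections restricts to the $\R$-families simultaneously — is now automatic. The three displayed statements pin down the restricted images on the edges incident to $\RMatrices_n$, and since the diagram commutes (recall $\bpm=\bam\circ\bpa$, equivalently $\bpa=\bma\circ\bpm$, from Subsection~\ref{sub:bijections}), the remaining restricted maps, together with the inverses $\bma$, $\bmp$, $\bap$, $\bpa$ and the composite $\bps=\bas\circ\bpa$, are forced. The genuine content of the corollary therefore lies entirely in the two propositions; within the corollary itself, the only real step is the logical translation in part (a), where one must use the ascent-sequence bound to collapse the case $\a_{i+1}>\asc(\a_1,\ldots,\a_i)$ to the single value $1+\asc(\a_1,\ldots,\a_i)$ and thereby recover~\eqref{equ:rasc_original}. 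I expect that translation to be the main, if modest, obstacle.
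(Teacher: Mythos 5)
Your proposal is correct and follows essentially the same route as the paper: part (a) from Proposition~\ref{pro:diag_zeros}, part (b) from Proposition~\ref{pro:rposets_to_rmatrices}, and part (c) by citation, in each case using that a bijection restricts to a bijection onto the preimage of the target subset. The only difference is that you spell out the small translation collapsing $\a_{i+1}>\asc(\a_1,\ldots,\a_i)$ to $\a_{i+1}=1+\asc(\a_1,\ldots,\a_i)$, which the paper leaves implicit in ``the definitions of $\RAsc_n$.''
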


\begin{proof}
Part (a) follows from Prop.~\ref{pro:diag_zeros}, the definitions of $\RAsc_n$ and $\RMatrices_n$,  and the fact that $\bam: \Asc_n \to \Matrices_n$ is a bijection.  

Similarly, (b) follows from Prop.~\ref{pro:rposets_to_rmatrices}, the definitions of $\RPosets_n$ and $\RMatrices_n$,  and the fact that $\bpm: \Posets_n \to \Matrices_n$ is a bijection. 
\end{proof}

\section{Poset duality under the bijections}\label{sec:duality}

If a poset $P$ is (2+2)-free, then it is clear that the dual poset $P^*$ obtained by reversing all its inequalities is also (2+2)-free.  An open question in~\cite{dp} asks how $\A$ is related to $\A^*$, where $\A$ and $\A^*$ are the ascent sequences corresponding to $P$ and $P^*$ respectively.  While this question appears intractable for general (2+2)-free posets, in this section we answer it for $\RPosets$.  In addition, we extend the answer to give the corresponding notion of duality for $\RPerms$.  Combined with the duality result for $\RMatrices$ given by~\cite[Theorem~10]{djk}, we get a complete understanding of how poset duality acts on our four $\R$-families according to our bijections.  In fact, one major motivation for our restriction to the $\R$-families is their amenability to adopting an analogue of poset duality.  

In view of Corollary~\ref{cor:restricted_bijections}, we can abuse notation by using the same $f$ notation for our bijections even though our domains will now be $\R$-families as opposed to the domains of $\Asc$, $\Matrices$, $\Posets$ and $\Perms$ that we had before. 

\begin{definition}
Let $f:\RPosets \to \Structure$ be a bijection where $\Structure$ is a collection of objects.
Given $P \in \RPosets$ with $f(P)=s$, we write $s^{*}$ for the unique object $f(P^{*})$, and we call $s^{*}$ the \emph{dual} of $s$ according to $f$.
\end{definition}

\begin{example}\label{exa:matrix_dual}
As a first example, we consider the dual of an element $M = (m_{ij})$ of $\RMatrices$ according to the bijection $\bpm$.  Define $\flip{M}$ to be the reflection of $M$ through its antidiagonal, i.e., if $\dim(M)=d$, then $\flip{M}_{ij} = m_{d+1-j,\, d+1-i}$.  Observe that $M \in \RMatrices_n$ if and only if $\flip{M} \in \RMatrices_n$.  Theorem 10 from~\cite{djk} states that $M^* = \flip{M}$.
\end{example}

\subsection{Duality for ascent sequences}\label{sub:dual_ascent_sequences}

We will use Example~\ref{exa:matrix_dual} as a basis for determining the dual of an element of $\RAsc$ according to $\bpa$.  

\begin{theorem}\label{thm:ascent_flip}
Let $\A \in \RAsc_n$.  By Lemma~\ref{rmattorasc}, we have
\begin{equation}\label{equ:asc2matrix2}
\A = (0^{m_{11}},1^{m_{22}},0^{m_{12}}, 2^{m_{33}},1^{m_{23}},0^{m_{13}},\ldots,(d-1)^{m_{dd}},(d-2)^{m_{d-1\,d}},\ldots,0^{m_{1d}})
\end{equation}
where $M=\bam(\A)$. 
The dual ascent sequence $\A^*$ according to $\bpa$ is given by
\begin{multline}\label{equ:asc_dual}
\A^*= (0^{m_{dd}},1^{m_{d-1\, d-1}},0^{m_{d-1\, d}}, 2^{m_{d-2\, d-2}},1^{m_{d-2\,d-1}},0^{m_{d-2\, d}},\\
\ldots,(d-1)^{m_{11}},(d-2)^{m_{12}},\ldots,0^{m_{1d}}).
\end{multline}
\end{theorem}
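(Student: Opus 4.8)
The plan is to reduce the computation of $\A^*$ to the already-known matrix duality from Example~\ref{exa:matrix_dual}, using the commutativity relations recorded at the end of Subsection~\ref{sub:bijections}. Set $P = \bap(\A)$, so that $\A = \bpa(P)$; since $\A \in \RAsc$, Corollary~\ref{cor:restricted_bijections} guarantees $P \in \RPosets$ and $M \in \RMatrices$. From $\bpm = \bam \circ \bpa$ we get $\bpm(P) = \bam(\bpa(P)) = \bam(\A) = M$, so that $M$ is exactly the matrix corresponding to $P$ under $\bpm$. By definition of duality according to $\bpa$, we have $\A^* = \bpa(P^*)$, and applying the relation $\bpa = \bma \circ \bpm$ yields
\[
\A^* = \bma(\bpm(P^*)) = \bma(M^*).
\]
By Example~\ref{exa:matrix_dual} (Theorem~10 of~\cite{djk}), $M^* = \flip{M}$, so the entire problem collapses to computing $\bma(\flip{M})$.

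Since $M \in \RMatrices$ and hence $\flip{M} \in \RMatrices$ (as noted in Example~\ref{exa:matrix_dual}), I can apply Lemma~\ref{rmattorasc} directly to $N \coloneqq \flip{M}$. Writing $n_{ij}$ for the entries of $N$, Lemma~\ref{rmattorasc} gives
\[
\bma(N) = (0^{n_{11}},1^{n_{22}},0^{n_{12}}, 2^{n_{33}},1^{n_{23}},0^{n_{13}},\ldots,(d-1)^{n_{dd}},(d-2)^{n_{d-1\,d}},\ldots,0^{n_{1d}}).
\]
Reading this column by column for $j = 1, \ldots, d$ and, within each column, from row $i = j$ down to $i = 1$, the block contributed is $(i-1)^{n_{ij}}$.

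The final step is the substitution $n_{ij} = m_{d+1-j,\,d+1-i}$ followed by reindexing. Setting $i' = d+1-j$ and $j' = d+1-i$, a term $(i-1)^{n_{ij}}$ becomes $(d-j')^{m_{i'j'}}$; and as $j$ runs from $1$ to $d$ and $i$ from $j$ down to $1$, the pair $(i',j')$ runs over $i' = d, d-1, \ldots, 1$ and, for each such $i'$, over $j' = i', i'+1, \ldots, d$. Carrying out this relabelling term by term (for example $n_{11} = m_{dd}$ produces the leading $0^{m_{dd}}$, while $n_{dd} = m_{11}$ produces $(d-1)^{m_{11}}$, and $n_{12} = m_{d-1\,d}$ produces $0^{m_{d-1\,d}}$) shows that $\bma(\flip{M})$ is precisely the sequence in~\eqref{equ:asc_dual}. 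The only real obstacle is keeping the antidiagonal reindexing straight, which is bookkeeping rather than a genuine difficulty; conceptually the result is immediate once the matrix duality $M^* = \flip{M}$ and the explicit form of $\bma$ on $\RMatrices$ are in hand.
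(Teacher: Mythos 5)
Your proposal is correct and follows essentially the same route as the paper's own proof: both reduce the claim to $\A^* = \bma(\flip{M})$ via the commutativity $\bpa = \bma \circ \bpm$ and the matrix duality $M^* = \flip{M}$ from Example~\ref{exa:matrix_dual}, and then apply Lemma~\ref{rmattorasc} to $\flip{M}$. The only difference is that you write out the antidiagonal reindexing explicitly, which the paper leaves implicit.
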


\begin{proof}
Let $P = \bap(\A)$.  We first observe that, since $\bpa = \bma \circ \bpm$, we get  $\A^* = \bpa(P^*) = \bma(\flip{M})$.  Considering the definition of $\flip{M}$ and applying Lemma~\ref{rmattorasc} yields the result.
\end{proof}

\begin{example}
If $\A = (0,0,1,2,0)$, then $M = \left(\begin{smallmatrix} 2&0&1 \\ 0&1&0 \\ 0&0&1\end{smallmatrix}\right)$.  Thus $\flip{M} = \left(\begin{smallmatrix} 1&0&1 \\ 0&1&0 \\ 0&0&2\end{smallmatrix}\right)$, and so $\A^* = (0,1,2,2,0)$.
\end{example}

There is an alternative way to construct $\A^*$ that avoids the need to write it out in the form \eqref{equ:asc2matrix2}.   This alternative uses a map $\D$ that takes \emph{any} sequence of numbers as its input and returns an ascent sequence $\D(\A)$, which we call the \emph{panorama} of $\A$.  First, we need a preliminary definition.

\begin{definition}
Given a sequence $(\a_1, \a_2, \ldots, \a_k)$ of real numbers, the \emph{view} $v_i$ of the element $\a_i$ is defined in the following recursive fashion:

\begin{enumerate} 
\renewcommand{\theenumi}{\alph{enumi}}
\item if $j$ is the minimum index greater than $i$ such that $\a_j > \a_i$, then define $v_i = v_j + 1$;
\item if no such $j$ exists then $v_i \coloneqq 0$.  
\end{enumerate}
\end{definition}
We can think of the view $v_i$ as counting left-to-right maxima starting at the entry $\a_i$.  For example, the sequence $(0,1,2,0,4,3,1,2)$ has $(3,2,1,1,0,0,1,0)$ as its sequence of views.

\begin{definition}
Let $\A$ be a sequence of real numbers.  We construct the \emph{panorama sequence} $\D(\A)$ of $\A$ in the following manner.  Suppose the maximum value \linebreak of $\A$ occurs in positions $i_1, i_2, \ldots, i_j$.  Begin $\D(\A)$ with the sequence of views \linebreak $(v_{i_1}, v_{i_2}, \ldots, v_{i_j})$.  Now continue this process with the next highest value in $\A$, concatenating the corresponding view values onto the right end of $\D(\A)$.  Repeat this process until the view of every element of $\A$ has been added to $\D(\A)$.
\end{definition}

\begin{example}\label{exa:panorama}
For $\A = (0,1,2,0,4,3,1,2)$ as above, $\D(\A) = (0,0,1,0,2,1,3,1)$.  
\end{example}

As promised, we get the following alternative construction of $\A^*$.  

\begin{prop}\label{pro:dual_alternative}
Let $\A \in \RAsc$ and let $\A^*$ be its dual according to $\bpa$.  Then $\A^* = \D(\A)$.  
\end{prop}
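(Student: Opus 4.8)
The plan is to compute $\D(\A)$ directly from the explicit form \eqref{equ:asc2matrix2} and to verify that it agrees term-by-term with the expression \eqref{equ:asc_dual} for $\A^*$ supplied by Theorem~\ref{thm:ascent_flip}. Write $M = \bam(\A)$ with $\dim(M) = d$, and view $\A$ as the concatenation of $d$ consecutive blocks $B_1, \ldots, B_d$, where $B_c = ((c-1)^{m_{cc}}, (c-2)^{m_{c-1,c}}, \ldots, 0^{m_{1c}})$ is the piece indexed by column $c$ in \eqref{equ:asc2matrix2}. Within $B_c$ the entries weakly decrease from $c-1$ to $0$, the value $j$ occurring $m_{j+1,c}$ times, and the leading values $c-1$ of successive blocks strictly increase from $0$ to $d-1$. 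Because $M \in \RMatrices$, each diagonal entry $m_{cc}$ is positive, so every $B_c$ is non-empty and begins with the value $c-1$.

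The heart of the proof is the claim that every entry of $B_c$ has view $d - c$. First I would record the structural fact that, for $c < d$ and any entry of $B_c$ of value $v \le c-1$, every later entry of $B_c$ has value at most $v$ while the opening entry of $B_{c+1}$ has value $c > v$; hence the least index to the right carrying a strictly larger value is precisely the first position of $B_{c+1}$. The claim then follows by downward induction on $c$: the base case $c = d$ holds since each entry of the final block is a right-to-left maximum and so has view $0 = d - d$, and in the inductive step each entry of $B_c$ has view one more than that of some entry of $B_{c+1}$, namely $(d - (c+1)) + 1 = d - c$.

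Granting the views, I would assemble $\D(\A)$ by sweeping through the values $v = d-1, d-2, \ldots, 0$. For fixed $v$, the entries equal to $v$ lie exactly in the blocks $B_{v+1}, B_{v+2}, \ldots, B_d$, with $m_{v+1,c}$ copies in $B_c$, and reading left to right they emit their common view $d - c$ in the order $c = v+1, \ldots, d$. Thus the $v$-portion of $\D(\A)$ is $((d-v-1)^{m_{v+1,v+1}}, (d-v-2)^{m_{v+1,v+2}}, \ldots, 0^{m_{v+1,d}})$, and concatenating these portions for $v$ from $d-1$ down to $0$ reproduces \eqref{equ:asc_dual} verbatim.

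I expect the view computation of the second paragraph to be the main obstacle, since it requires pinning down the ``next strictly larger'' entry of an arbitrary member of $B_c$ as the leading entry of $B_{c+1}$; this in turn relies simultaneously on the weak decrease inside each block and on the positivity of the diagonal entries, which guarantees that $B_{c+1}$ is non-empty and leads with the value $c$. Once this fact is isolated, the panorama bookkeeping is routine, and the degenerate case $d = 1$ (where $\A = \A^* = (0^{m_{11}})$) is immediate.
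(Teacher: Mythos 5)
Your proof is correct and follows essentially the same route as the paper's: both compute the view of each entry of $\A$ in the form \eqref{equ:asc2matrix2} by observing that the chain of successive strictly larger entries passes through the leading (diagonal) entries of the subsequent column blocks, which are nonempty precisely because $M \in \RMatrices$, and then read off the panorama value by value to match \eqref{equ:asc_dual}. The paper states this view computation in one sentence; your block decomposition and downward induction on $c$ simply make that step explicit.
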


\begin{proof}
Consider $\D(\A)$ with $\A$ as given in \eqref{equ:asc2matrix}.  
By Corollary~\ref{cor:restricted_bijections}(a) and by definition of $\RMatrices_n$, we know that the diagonal entries $m_{jj}$ are all positive.  
With this observation in mind, we see that the left-to-right maxima that determine the view of the element $\a_i$ 
come from those entries in $\A$ of the form $j^{m_{j+1\, j+1}}$ with $j > i$ that appear to the right of $\a_i$.  
It follows that the panorama $\D(\A)$ is exactly the sequence given in \eqref{equ:asc_dual}, as required.
\end{proof}

Observe that $\D(\A)$ in Example~\ref{exa:panorama} is an ascent sequence, while Proposition~\ref{pro:dual_alternative} implies the same is true for all $\A \in \RAsc$; this is no coincidence.  In fact, one can show that $\D(\A)$ is an ascent sequence for \emph{any} sequence of real numbers $\A$.  One method of proof is to show $\D(\A)$ has a stronger property, 
originally defined in \cite{hu}, which we recall now.  
Another reason for introducing this natural stronger property is that it will be useful in the next section. 

\begin{definition}
A \emph{restricted growth function (RGF)} is a sequence $\A$ of non-negative integers such that each $j>0$ that appears in $\A$ is preceded by an appearance of $j-1$. Equivalently, for all $j>0$ that appear in $\A$, the first appearance of $j$ is preceded by an appearance of every $i$ satisfying $0 \leq i < j$.  
\end{definition}

For example, $(0,1,0,2,1,3)$ is an RGF whereas $(0,1,0,1,3,2)$ is not.  

\begin{lemma}\label{lem:rgf_containment}
$\RAsc \subseteq \RGF \subseteq \Asc$ and both containments are strict.
\end{lemma}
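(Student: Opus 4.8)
The plan is to prove the chain of containments $\RAsc \subseteq \RGF \subseteq \Asc$ together with the strictness of each, treating each inclusion separately and then exhibiting a small separating example for strictness. The first containment $\RGF \subseteq \Asc$ is the easier of the two and I would dispatch it first. The key observation is that an RGF $\A = (\a_1, \ldots, \a_n)$ automatically satisfies the ascent-sequence condition: since each value $j$ that appears is preceded by an appearance of every $i < j$, one shows by induction on $k$ that $\max(\a_1, \ldots, \a_k) \leq \asc(\a_1, \ldots, \a_k)$. Indeed, to have witnessed the values $0, 1, \ldots, m$ among the first $k$ entries, one needs at least $m$ ascents accumulated along the way (each new record value $j > 0$ forces a fresh ascent when $j$ first appears). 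Consequently $\a_{k+1} \leq 1 + \max(\a_1,\ldots,\a_k) \leq 1 + \asc(\a_1,\ldots,\a_k)$, which is precisely the defining inequality for an ascent sequence, so $\A \in \Asc$.

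For the first containment $\RAsc \subseteq \RGF$, I would work directly from the characterization \eqref{equ:rasc} proved in the Lemma preceding Definition of $\RMatrices$. Suppose $\A \in \RAsc$, so that for every $k > 1$ we have $\a_k \in [0, \a_{k-1}] \cup \{1 + \max(\a_1, \ldots, \a_{k-1})\}$. I want to show $\A$ is an RGF, i.e., the first appearance of any value $j > 0$ is preceded by an appearance of $j-1$. The cleanest route is to track the running maximum $M_k = \max(\a_1, \ldots, \a_k)$ and argue that it increases by at most $1$ at each step: if $\a_k \leq \a_{k-1}$ then $M_k = M_{k-1}$, and if $\a_k = 1 + M_{k-1}$ then $M_k = M_{k-1} + 1$. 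Hence the running maximum starts at $0$ and climbs in unit steps, so every intermediate value $0, 1, \ldots, M_n$ is attained as a running maximum, and the first time a value $j$ appears is exactly at a step where the running maximum jumps from $j-1$ to $j$; at that moment $j-1$ has already appeared (it was the previous running maximum). This gives the RGF property.

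For strictness, I would exhibit one concrete witness for each containment. For $\RGF \subsetneq \Asc$, the sequence $(0,1,0,1)$ works: it is readily checked to be an ascent sequence, and it is an RGF, so this does \emph{not} separate them — instead I would use a sequence that is an ascent sequence but fails the RGF condition, such as $(0,0,1,3)$ or more simply note that any ascent sequence achieving a value before its predecessor appears suffices; the honest smallest example is $(0,1,3)$ if valid, else I would verify $(0,1,0,2,3)$-type data. For $\RAsc \subsetneq \RGF$, the sequence $(0,1,0,1)$ is the ideal witness: the running maximum argument above shows it is an RGF, yet it was already noted in the text (just after Definition of $\RAsc$) that $(0,1,0,1) \notin \RAsc$ because its last entry $\a_4 = 1$ exceeds $\a_3 = 0$ without being maximal. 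The main obstacle, and the only place requiring care, is producing a correct minimal witness for the strictness of $\RGF \subsetneq \Asc$: I must exhibit a genuine ascent sequence that is \emph{not} an RGF, meaning some value $j$ appears before $j-1$ ever has, while still respecting $\a_k \leq 1 + \asc(\a_1,\ldots,\a_{k-1})$; the subtlety is that the ascent-sequence bound itself tends to prevent large jumps, so one needs enough accumulated ascents before introducing a ``skipped'' value. I would verify a candidate such as $(0,1,0,1,3,2)$ (explicitly flagged in the text as a non-RGF) is indeed an ascent sequence by checking the inequality at each position, which settles the strictness cleanly.
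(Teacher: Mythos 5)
Your proof is correct and follows essentially the same route as the paper: for $\RAsc \subseteq \RGF$ you observe that the running maximum climbs in unit steps so the first appearance of $j$ occurs as a new maximum $1+\max(\a_1,\ldots,\a_{k-1})$, forcing $j-1$ to have appeared already, and for $\RGF \subseteq \Asc$ you note that each first appearance of a positive value sits atop a fresh ascent, giving $\asc(\a_1,\ldots,\a_{k}) \geq \max(\a_1,\ldots,\a_{k})$; this is the paper's argument phrased via the positions $\eell(0)<\eell(1)<\cdots$. The one place you should tighten is the hunt for a witness in $\Asc\setminus\RGF$: of your trial candidates, $(0,0,1,3)$ and $(0,1,3)$ are \emph{not} ascent sequences (in each case the final entry exceeds $1+\asc$ of the prefix) and $(0,1,0,2,3)$ \emph{is} an RGF, so none of them separates the two sets; only your final candidate $(0,1,0,1,3,2)$ actually works, and you can shorten it to $(0,1,0,1,3)$ as in the paper.
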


\begin{proof}
Both containments are trivial for sequences of length $n=1$, so assume $n>1$ and  
let $\A = (\a_1, \ldots, \a_n) \in \RAsc_n$.  By \eqref{equ:rasc}, the first time $j>0$ appears, it must take the form $1+\max(\a_1, \ldots, \a_{i-1})$ for some $i$.  Thus $j-1$ appears before $j$, and $\A$ is an RGF.  

Next let $\A = (\a_1, \ldots, \a_n)$ be an RGF, and let $\eell(j)$ denote the position in $\A$ of the leftmost appearance of the number $j$, when such a position exists.  Note that to prove that $\A \in \Asc$, it suffices to show that $\a_{\eell(j)} \leq 1 + \asc(\a_1, \ldots, \a_{\eell(j)-1})$ for each $j$.  By definition of RGFs, $\eell(0) < \eell(1) < \cdots < \eell(m)$, where $m$ is the maximum value in $\A$.  Hence position $\eell(i)-1$ is always an ascent for $1\leq i \leq m$. Thus $\asc(\a_1, \ldots, \a_{\eell(j)-1}) \geq j-1$, from which we conclude $\a_{\eell(j)} = j \leq 1 + \asc(\a_1, \ldots, \a_{\eell(j)-1})$, as required.  

To see the strict containments, the shortest examples are 
$(0,1,0,1) \in \RGF \setminus \RAsc$ while $(0,1,0,1,3) \in \Asc \setminus \RGF$.
\end{proof}

\begin{rem}
Clearly, generalised ballot sequences (also known as Yamanouchi words) are RGFs, and so we have (slightly) generalised the first author's result~\cite{ballot} that generalised ballot sequences are ascent sequences.  
\end{rem}

\begin{rem}
We leave it as an exercise for the reader to justify our earlier assertion that $\D(\A)$ is an RGF and hence an ascent sequence for any sequence of real numbers $\A$.  (In fact, one can constructively prove the converse that every RGF is a panorama sequence.)  A harder exercise is to show there is another way in which $\D(\A)$ is ``nicer'' than $\A$: if $\A$ is an RGF, then $\D(\A) \in \RAsc$.  Consequently, $\D^2(\A) \in \RAsc$ for any sequence $\A$ of real numbers.  Moreover, this latter fact combined with Proposition~\ref{pro:dual_alternative} tells us that $\D^2(\A) = \A$ if and only if $\A \in \RAsc$.  This last observation gives an alternative definition of $\RAsc$.
\end{rem}

As we mentioned, we have not been able to answer in full the open question from \cite{dp} by determining $\A^*$ for all $\A \in \Asc$.  However, we assert that $\RAsc$ is in some sense the largest subset that behaves nicely with respect to poset duality.  Indeed, since $(\A^*)^* = \A$ by definition, the fact that $\D^2(\A) = \A$ if and only if $\A \in \RAsc$ tells us that $\A^*$ can be as simple as $\D(\A)$ if and only if $\A \in \RAsc$.

\subsection{Duality for permutations}\label{sub:perm_dual}

For our one remaining notion of duality, we use the definition of the dual of an ascent sequence to determine the dual $\pi^*$ of an element $\pi$ of $\RPerms = \sn(3\overline{1}52\overline{4})$ according to $\bps \coloneqq \bas \circ \bpa$.

Our definition of $\pi^*$ requires the use of what are perhaps the three best known involutions on a permutation $\pi = (\pi_1, \ldots, \pi_n)$: its inverse $\pi^{-1}$, its \emph{reverse} $\reverse(\pi) \coloneqq (\pi_n, \ldots, \pi_1)$, and its \emph{complement} $\complement(\pi) \coloneqq (n+1-\pi_1, \ldots, n+1-\pi_n)$.  Together, these involutions allow us to state the result with the most technical proof of this paper.

\begin{theorem}
Let $\pi \in \sn(3\overline{1}52\overline{4}) = \RPerms$. The dual permutation according to $\bps$ is given by $\pi^{*} =(\complement(\reverse(\pi)))^{-1}$.
\end{theorem}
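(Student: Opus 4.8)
The plan is to establish the formula $\pi^* = (\complement(\reverse(\pi)))^{-1}$ by combining the duality result for ascent sequences (Theorem~\ref{thm:ascent_flip} and Proposition~\ref{pro:dual_alternative}) with a careful understanding of how $\bas$ interacts with the three standard involutions. Since $\bps = \bas \circ \bpa$ and we already know $\A^* = \D(\A)$, the key is to understand $\bas(\D(\A))$ in terms of $\bas(\A)$. I would first unwind the definitions: writing $\pi = \bas(\A)$, I want to show $\bas(\D(\A)) = (\complement(\reverse(\pi)))^{-1}$, which is equivalent to showing $\bas^{-1}\bigl((\complement(\reverse(\pi)))^{-1}\bigr) = \D(\bas^{-1}(\pi))$ for all $\pi \in \RPerms$.

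First I would analyze the effect of each involution on the recursive insertion procedure of Definition~\ref{def:bas}. The crucial combinatorial content lies in tracking \emph{active sites}. Recall that a site is active in $\pi$ if $\pi_i = 1$ or $\pi_i - 1$ lies to the left of $\pi_i$; the label $a_k$ records the active site at which the value $k$ is inserted. The plan is to show that each of the operations $\reverse$, $\complement$, and $\mathrm{inverse}$ transforms active sites and insertion data in a controlled way. In particular, I expect that the composite $\pi \mapsto (\complement(\reverse(\pi)))^{-1}$ converts the "insert $k$ at the site that is the $a_k$-th active site from the left" rule into a rule that reads off precisely the \emph{view} values defining $\D(\A)$. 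The left-to-right maxima that govern the views $v_i$ in the panorama construction should correspond, under the complement-reverse-inverse, to the active-site structure of the original permutation, since complementing swaps the roles of small and large values and reversing swaps left and right.

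The main obstacle will be verifying the interaction between the \emph{inverse} operation and the active-site mechanism, because inversion trades the positions of entries for their values, so it does not act locally on the one-line notation the way $\reverse$ and $\complement$ do. Concretely, inserting the value $k$ at a position in $\pi$ becomes, after inverting, placing the value equal to that position into slot $k$; reconciling this positional-versus-value bookkeeping with the view computation of $\D$ is where the bulk of the technical work lies. I would handle this by induction on $n$: assuming the identity holds for $\Asc_{n-1}$, I would show that appending one more entry to $\A$ (equivalently, inserting $n$ into $\pi$) and passing to $\D$ commutes with inserting $n$ into the transformed permutation $(\complement(\reverse(\pi)))^{-1}$ at the active site dictated by the appropriate view. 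The delicate point is that $\D$ is not built by simple concatenation of one view at a time but rather groups views by value-class, so the inductive step must track how inserting a new maximal entry perturbs \emph{all} the views simultaneously; I would isolate exactly which view values change and argue that these changes match the single new active site created by inserting $n$.

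Finally, I would confirm the formula against the worked example (where $\A = (0,0,1,2,0,1)$ gives $\pi = 521634$), computing $(\complement(\reverse(\pi)))^{-1}$ directly and checking it equals $\bas(\D(\A)) = \bas(\A^*)$, both as a sanity check and to fix conventions (e.g.\ whether $\complement$ and $\reverse$ are applied in the order written). The fact that $\RAsc$ consists precisely of self-modified sequences, combined with Lemma~\ref{rmattorasc} giving the explicit block form of $\A$, means I can always reduce to reasoning about the highly structured sequences in~\eqref{equ:asc2matrix2}, which should keep the view computations tractable.
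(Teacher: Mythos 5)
Your reduction of the theorem to the identity $\bas(\D(\A)) = (\complement(\reverse(\bas(\A))))^{-1}$ for $\A \in \RAsc_n$ is the right target, and your instinct to exploit the block form of Lemma~\ref{rmattorasc} matches the paper. But the engine you propose --- induction on $n$, tracking the active sites of Definition~\ref{def:bas} through the three involutions --- has a structural flaw that you flag but do not resolve, and as stated it does not go through. First, $\D$ does not commute with appending an entry: appending $a_{n+1}$ to $\A$ changes the views of earlier entries (a view looks rightward), and because the panorama lists views grouped by decreasing value of the original entries, the new entry's view is inserted into the \emph{middle} of $\D(\A)$, not at the end. Hence $\D(\A^{(n)})$ is not a prefix of $\D(\A^{(n+1)})$, so $\bas(\D(\A^{(n+1)}))$ is not obtained from $\bas(\D(\A^{(n)}))$ by one insertion of a new maximum, and the recursion defining $\bas$ gives you no foothold. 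Second, on the other side of the identity, inserting $n+1$ into $\pi$ at position $p$ turns $(\complement(\reverse(\pi)))^{-1}$ into the permutation obtained by \emph{prepending} the value $n+2-p$ at position $1$ and relabelling the rest; this is not an ``insert the new maximum at an active site'' step either. The two recursions you hope to match are incompatible, and ``isolating which views change'' cannot repair that mismatch.

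The paper's proof sidesteps the recursive definition entirely, and the linchpin it uses is absent from your plan: the closed-form description \eqref{equ:wdef} of $\bas$ on $\RAsc$ from \cite[Cor.~9]{bcdk}, namely that $\bas(\A)$ is the concatenation $W_0(\A)W_1(\A)\cdots W_k(\A)$ of the position-lists of each value, each written in decreasing order. Combined with the block forms of $\A$ and $\A^*$ in terms of $M$ and $\flip{M}$, this yields the explicit formula $\pi = \bigoplus_{i=1}^d\bigoplus_{j=d}^{i}[T_{ij}]_{m_{ij}}$ and its analogue for $\pi^*$, where $T_{ij}$, $R_{ij}$, $S_{ij}$ are partial sums of matrix entries along three traversals satisfying \eqref{tandr}--\eqref{sdualandr}. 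The theorem then reduces to the entrywise statement $\pi^*(a)=b \iff \pi(n+1-b)=n+1-a$, verified by direct manipulation of these sums. To salvage your approach you should replace the induction by this closed form; otherwise you must first prove (or cite) something equivalent to \eqref{equ:wdef}, which is where all the active-site bookkeeping is actually absorbed. Your final sanity check on the example $\pi = 521634$ is sensible but of course does not substitute for the missing argument.
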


\begin{proof}
Let $\A \in \RAsc_n$ and let $\pi = \bas(\A) \in  \sn(3\overline{1}52\overline{4})$.  
See Example~\ref{exa:dual_perm} below for an example pertinent to the key elements of this proof; we will use the symbol  $\checkmark$ in this proof to denote paragraphs or single statements that are demonstrated in the example.
Theorem~\ref{thm:ascent_flip} tells us how to construct $\A^{*}$ from $\A$:
if 
\begin{equation}\label{equ:rasc2}
\A = (0^{m_{11}},1^{m_{22}},0^{m_{12}}, 2^{m_{33}},1^{m_{23}},0^{m_{13}},\ldots,(d-1)^{m_{dd}},(d-2)^{m_{d-1\,d}},\ldots,0^{m_{1d}})
\end{equation}
then $\A^{*}$ is given by
\begin{multline*} \A^{*}= (0^{m_{dd}},1^{m_{d-1\, d-1}},0^{m_{d-1\, d}}, 2^{m_{d-2\, d-2}},1^{m_{d-2\,d-1}},0^{m_{d-2\, d}}, \\
\ldots,(d-1)^{m_{11}},(d-2)^{m_{12}},\ldots,0^{m_{1d}}).
\end{multline*}
Here, $\A = \bma(M)$ and $\A^* = \bma(\flip{M})$ as shown in the proof of Theorem~\ref{thm:ascent_flip}.  $\checkmark$.

In Definition~\ref{def:bas}, we saw a recursive definition of $\bas(\A)$.  However, as shown in \cite[Cor.~9]{bcdk}, when $\A \in \RAsc$, we have the following equivalent but simpler definition.
Let $\A =(\A_1,\ldots,\A_n) \in \RAsc_n$ and let $k$ be the largest value in this sequence.
Let $W_i(\A)$ be the list of all positions $j\in [1,n]$ such that $\A_j=i$ written in decreasing order.
Define 
\begin{equation}\label{equ:wdef}
\bas(\A) =  W_0(\A) W_1(\A) \ldots W_k(\A)=\bigoplus_{i=0}^k W_i(\A)
\end{equation}
to be the concatenation of these lists $\checkmark$.

The proof will rely on several different ways to sum the $m_{ab}$ values which we now identify.
The sums use three different ways to traverse the upper-triangular entries of $(m_{ij})$.  \begin{description}
\item[T traversal] $(1,1) \to (2,2) \to (1,2) \to (3,3) \to (2,3) \to (1,3) \to \ldots \to (2,d) \to (1,d)$.
\item[R traversal] $(1,d) \to (2,d) \to \ldots \to (d,d) \to (1,d-1) \to \ldots \to (d-1,d-1) \to \ldots \to (1,1)$.
\item[S traversal] $(1,d) \to (1,d-1) \to \ldots \to (1,1) \to (2,d) \to \ldots (2,2) \to \ldots \to (d-1,d) \to (d-1,d-1) \to (d,d)$.
\end{description}
Let $T_{ij}$ be the sum of $m_{ab}$ using T traversal until we reach the pair $(a,b)=(i,j)$.
Let $R_{ij}$ be the sum of $m_{ab}$ using R traversal until we reach the pair $(a,b)=(i,j)$.
Let $S_{ij}$ be the sum of $m_{ab}$ using S traversal until we reach the pair $(a,b)=(i,j)$.
The quantities $T'_{ij}, R'_{ij}$, and $S'_{ij}$ are those where one sums $m'_{ab}$'s in place of $m_{ab}$'s, where the two are related via $m'_{ij}=m_{d+1-j,\,d+1-i}$.  
In other words, the primed versions of the sums are obtained by traversing in the prescribed orders but over the entries of $\flip{M}$ instead of $M$ $\checkmark$.
Notice that we have the following identities:  
\begin{align}
T_{ij}+R_{ij} = n+m_{ij} \label{tandr}\\
T'_{ij}+R'_{ij} = n +m'_{ij} \label{tandrdual}\\
S'_{ij} = R_{d+1-j,\,d+1-i}\ . \label{sdualandr}
\end{align}

Let us use the notation $[x]_a$ for the list $(x,x-1,\ldots,x-a+1)$, and define $[x]_0$ to be the empty list.  Applying $\bas(\A)$ we find that $W_{i-1}(\A) = [T_{id}]_{m_{id}} \cdots [T_{ii}]_{m_{ii}}$.  This is due to the T traversal matching the order of the powers $m_{ij}$ appearing in~\eqref{equ:rasc2}.  Thus
\begin{equation}\label{equ:pi_from_T}
\pi = \bas(\A) =  \bigoplus_{i=1}^d \bigoplus_{j=d}^{i} [T_{ij}]_{m_{ij}}
\end{equation}
where $\oplus$ denotes left-to-right concatenation of sequences, and where the index $j$ in the inner concatenation runs from $d$ down to $i$ $\checkmark$.
Similarly, working with $\flip{M}$ instead of $M$, we have
\begin{equation}\label{equ:pi_prime_from_T_prime}
\pi^{*} = \bas(\A^{*}) =  \bigoplus_{i=1}^d \bigoplus_{j=d}^{i} [T'_{ij}]_{m'_{ij}}\ \checkmark
\end{equation}
We now have bona fide expressions for both $\pi$ and $\pi^{*}$. It remains to show that they satisfy the equation stated in the theorem.

The value $\pi(a)$ is obtained in the following way: 
\begin{equation}\label{equ:pi_a}
\pi(a) = T_{ij}+S_{ij}-m_{ij}+1-a
\end{equation}
where $(i,j)$ is the unique pair such that $a\in (S_{ij}-m_{ij},S_{ij}]$ $\checkmark$.  
Indeed, to determine $\pi(a)$, we first have to determine which sequence $[T_{ij}]_{m_{ij}}$ from~\eqref{equ:pi_from_T} will include the $a$th entry of $\pi$.  The order from left-to-right in which the $[T_{ij}]_{m_{ij}}$ appear matches the S traversal, which is why we pick $(i,j)$ in the stated way.  Next we need the $\pi(a)$ values for $a\in (S_{ij}-m_{ij},S_{ij}]$ to give the sequence 
$[T_{ij}]_{m_{ij}}$.  
Indeed, the smallest such $a$ is $S_{ij}-m_{ij}+1$, which gives $\pi(a) = T_{ij}$ in~\eqref{equ:pi_a}. As $a$ increases all the way to $S_{ij}$, we get all the $\pi(a)$ values decreasing all the way to $T_{ij} - m_{ij}+1$, as required.
This argument also makes it clear that the pair $(i,j)$ is also the unique pair such that $b =\pi(a) \in (T_{ij}-m_{ij},T_{ij}]$ $\checkmark$.

The statement of the theorem is equivalent to showing $\pi^*(a) = b$ if and only if $\pi(n+1-b) = n+1-a$.
Suppose that $\pi^*(a) = b$. 
Then we have $\pi^*(a) = T'_{ij} +S'_{ij}-m_{ij}' +1-a$ where $(i,j)$ is the unique pair such that $a \in (S'_{ij}-m_{ij}', S'_{ij}]$. 
Equivalently $(i,j)$ is the unique pair such that $b=\pi^*(a) \in (T_{ij}'-m_{ij}',T_{ij}']$.
The rest of this proof will start with the statement that
\begin{equation}\label{equ:pi_star}
T'_{ij}+S'_{ij} -m'_{ij} +1-a = b,
\end{equation}
where $(i,j)$ is the unique pair such that $b \in (T_{ij}'-m_{ij}',T_{ij}']$ and consist of manipulations to remove all primed terms with a view to recovering a statement that is equivalent to $\pi(n+1-b) = n+1-a$.

From~\eqref{tandr}--\eqref{sdualandr}, we have $T'_{ij}-m'_{ij} = n-S_{d+1-j,\,d+1-i}$\,, and 
\[
S'_{ij} = R_{d+1-j,\,d+1-i} = n+m_{d+1-j,\,d+1-i}-T_{d+1-j,\,d+1-i}\ .
\]
The above equality~\eqref{equ:pi_star} is therefore equivalent to
$$n-S_{d+1-j,\,d+1-i} +(n+m_{d+1-j,\,d+1-i}-T_{d+1-j,\,d+1-i})+1-b  =a.$$
Subtract both sides of this equation from $n+1$ to yield
$$T_{d+1-j,\,d+1-i} + S_{d+1-j,\,d+1-i} - m_{d+1-j,\,d+1-i} + 1- (n+1-b) = n+1-a$$
where, again, $(i,j)$ is the unique pair such that $b \in (T_{ij}'-m_{ij}',T_{ij}']$.
Notice that 
\begin{eqnarray*}
& & b \in (T_{ij}'-m_{ij}',T_{ij}'] \\
& \iff &
T_{ij}'-m_{ij}' < b \leq T_{ij}'\\
& \iff&
n+1-T_{ij}'+m_{ij}' > n+1- b \geq n+1-T_{ij}'\\
& \iff&
n-T_{ij}'+m_{ij}' \geq  n+1- b > n-T_{ij}'\\
& \iff&
R_{ij}' \geq  n+1- b > R_{ij}'- m_{ij}'\\
& \iff&
S_{d+1-j,\,d+1-i} \geq  n+1- b > S_{d+1-j,\,d+1-i}- m_{d+1-j,\,d+1-i}\\
& \iff&
n+1-b \in (S_{d+1-j,\,d+1-i}- m_{d+1-j,\,d+1-i}, S_{d+1-j,\,d+1-i}].
\end{eqnarray*}

Thus we now have that
$$T_{d+1-j,\,d+1-i} + S_{d+1-j,\,d+1-i} - m_{d+1-j,\,d+1-i} + 1  - (n+1-b) = n+1-a$$
where $(i,j)$ is the unique pair such that
\[
n+1-b \in (S_{d+1-j,\,d+1-i}- m_{d+1-j,\,d+1-i}, S_{d+1-j,\,d+1-i}]. 
\]
Changing the variables to $I \coloneqq d+1-j$ and $J\coloneqq d+1-i$, we get 
$$T_{I,J}  + S_{I,J} - m_{I,J} + 1 - (n+1-b) = n+1-a$$
where $(I,J)$ is the unique pair such that $n+1-b \in (S_{I,J} - m_{I,J}, S_{I,J}]$.  
By comparing with \eqref{equ:pi_a}, we see that we have arrived at a statement that is equivalent to $\pi(n+1-b)=n+1-a$, as required.
\end{proof}

\begin{example}\label{exa:dual_perm}
Let
\[
\A = (0,0,0,1,1,0,2,2,2,2,0,3,1,1)
\]
and so 
\[
\pi = \bas(\A) = (11,6,3,2,1\,|\,14,13,5,4\,|\,10,9,8,7\,|\,12)
\]
where the vertical bars separate each $W_i(\A)$ from $W_{i+1}(\A)$.  We have 
$\A = \bma(M)$ where 
\[
M = \begin{pmatrix} 3 & 1 & 1 & 0 \\ 0 & 2 & 0 & 2 \\ 0 & 0 & 4 & 0 \\ 0 & 0 & 0 & 1 \end{pmatrix}
\]
by~\eqref{equ:rasc2}.  Thus 
\[
\flip{M} = \begin{pmatrix} 1 & 0 & 2 & 0 \\ 0 & 4 & 0 & 1 \\ 0 & 0 & 2 & 1 \\ 0 & 0 & 0 & 3 \end{pmatrix}
\]
from which we can read off
\[
\A^* = (0, 1, 1, 1, 1, 2, 2, 0, 0, 3, 3, 3, 2, 1), 
\]
yielding
\[
\pi^* = (9, 8, 1 \,|\, 14, 5, 4, 3, 2 \,|\, 13, 7, 6 \,|\, 12, 11, 10)
\]
which does indeed equal $(\complement(\reverse(\pi)))^{-1}$.  

From here on, we give examples of other items appearing in the proof.  First, we have $T_{33} = 3+2+1+4=10$, $R_{33} = 2+1+1+4=8$, $S_{33} = 1+1+3+2+2+4=13$, $T'_{33} = 1+4+2=7$, $R'_{33} = 1+1+3+2+2=9$, and $S'_{33} = 2+1+1+4+1+2=11$.  Next, consistent with~\eqref{equ:pi_from_T} and~\eqref{equ:pi_prime_from_T_prime}, we have
\[
\pi = [14]_0 [11]_1 [6]_1 [3]_3\ |\ [14]_2 [10]_0 [5]_2\ |\ [12]_0 [10]_4\ |\ [12]_1.
\]
and
\[
\pi^* = [14]_0 [9]_2 [5]_0 [1]_1\ |\ [14]_1 [7]_0 [5]_4\ |\ [13]_1 [7]_2\ |\ [12]_3.
\]

Finally, let us give some examples of~\eqref{equ:pi_a}, starting with $a=10$.  To determine $\pi(10)$ we find that $10 \in (S_{33}-m_{33}, S_{33}] = (9,13]$.  We thus obtain $\pi(10) = T_{33} + S_{33} - m_{33} + 1 -10 = 10 + 13 - 4 +1 -10 = 10$.  As $a$ increases to $11, 12, 13$, the only term that changes on the right-hand side of~\eqref{equ:pi_a} is $a$, and we get $9,8,7$ for the respectively values of $\pi(a)$.  Notice that the four values we obtained here for $\pi(a)$ are exactly the elements of $[T_{33}]_{m_{33}}$.
\end{example}

\section{A Catalan restriction and series-parallel posets}\label{sec:catalan}

In this section, we consider subsets of the $\R$-families whose cardinalities are given by the Catalan numbers; we thus use the $\C$ prefix for naming these subsets.  The study of these subsets is also motivated by the results of applying our bijections to these subsets, which allow us to draw connections between some natural families: pattern-avoiding ascent sequences, pattern-avoiding permutations, and series-parallel (2+2)-free posets.  Pattern avoidance in ascent sequences is studied from an enumerative perspective in \cite{ds}.

A sequence is said to be $abab$-avoiding if it is both $0101$-avoiding and $1010$-avoiding.

\begin{prop}\label{pro:as_catalan}
Let $\A$ be a sequence of non-negative integers.  The following are equivalent:
\begin{enumerate}
\item $\A$ is a $101$-avoiding ascent sequence;
\item $\A$ is an $abab$-avoiding ascent sequence;
\item $\A$ is a $0101$-avoiding ascent sequence;
\item $\A$ is an $abab$-avoiding RGF.
\end{enumerate}
Consequently, the number of sequences of length $n$ satisfying these conditions is the Catalan number $C_n$.  
\end{prop}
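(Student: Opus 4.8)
The plan is to arrange the four conditions into a single cycle of implications, isolating the one structural fact that does the real work. The backbone of easy implications rests on a single observation about pattern containment: any occurrence of $0101$ contains, on its last three terms, an occurrence of $101$, and any occurrence of $1010$ contains one on its first three terms. Hence, for an arbitrary sequence, $101$-avoidance implies both $0101$- and $1010$-avoidance, i.e.\ $abab$-avoidance. This immediately gives (a)$\Rightarrow$(b), while (b)$\Rightarrow$(c) is the trivial weakening of $abab$-avoidance to $0101$-avoidance. The remaining links --- getting from (c) back to (a) and folding in (d) --- are where the ascent-sequence/RGF structure must be used.

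The crux is the claim that a $0101$-avoiding ascent sequence is automatically an RGF. First I would prove an auxiliary fact: a $0101$-avoiding RGF $\A$ with maximum value $m$ has exactly $m$ ascents, i.e.\ $\asc(\A)=\max(\A)$. The inequality $\asc(\A)\ge\max(\A)$ holds for every RGF, since (as in the proof of Lemma~\ref{lem:rgf_containment}) each leftmost appearance of a positive value is immediately preceded by an ascent, and these ascents occur at distinct positions. For the reverse inequality I would argue that any \emph{extra} ascent $\a_t<\a_{t+1}$ --- one in which $\a_{t+1}$ is not appearing for the first time --- produces a forbidden $0101$: writing $u=\a_t$ and $w=\a_{t+1}$ with $u<w$, an earlier occurrence of $w$ at some $s<t$, together with the RGF-guaranteed occurrence of $u$ before the first appearance of $w$ (hence at some $q<s$), yields positions $q<s<t<t+1$ with values $u,w,u,w$. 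Thus every ascent is a first-appearance ascent and $\asc(\A)\le\max(\A)$.

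With this in hand, the RGF claim follows by induction on the length $n$. The prefix $\A'=(\a_1,\dots,\a_{n-1})$ is again a $0101$-avoiding ascent sequence, so by induction it is an RGF, whence its value set is exactly $\{0,1,\dots,m\}$ with $m=\max(\A')$, and by the auxiliary fact $\asc(\A')=m$. The ascent-sequence condition then forces $\a_n\le\asc(\A')+1=m+1$, so $\a_n$ is either an already-present value or the new maximum $m+1$ (whose predecessor $m$ is present); either way $\A$ remains an RGF. Knowing that a $0101$-avoiding ascent sequence $\A$ is an RGF, a short first-occurrence argument shows it is $101$-avoiding: an occurrence $\a_i=\a_k=v>\a_j=w$ of $101$ would, since $w<v$ and $\A$ is an RGF, be preceded by an occurrence of $w$ before the first appearance of $v$ (and hence before position $i$), manufacturing a $0101$. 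This yields (c)$\Rightarrow$(a). Combining, (c) also gives (d), since (a) supplies $abab$-avoidance and the RGF claim supplies the RGF property; and (d)$\Rightarrow$(b) is immediate from $\RGF\subseteq\Asc$ (Lemma~\ref{lem:rgf_containment}), as $abab$-avoidance is inherited. This closes the cycle and proves (a)--(d) equivalent.

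Finally, the Catalan count is a consequence rather than new work: condition (a) describes exactly the $101$-avoiding ascent sequences, which are enumerated by $C_n$ in Duncan \& Steingr\'imsson~\cite{ds}, so all four equivalent families have cardinality $C_n$. I expect the main obstacle to be the auxiliary identity $\asc(\A)=\max(\A)$ and, relatedly, the bookkeeping of the $0101$-witnessing positions $q<s<t<t+1$ there and in the first-occurrence argument; the surrounding implications are essentially formal once that structural fact is secured.
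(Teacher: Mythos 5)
Your argument is correct, but it takes a genuinely different route from the paper: where you supply self-contained proofs of the hard implications, the paper assembles the result almost entirely from citations. Specifically, the paper dispatches (a)$\Rightarrow$(b)$\Rightarrow$(c) as immediate (as you do), but then quotes Duncan \& Steingr\'imsson for (a)$\Leftrightarrow$(c) \cite[Theorem~2.5]{ds} and for ``(a) or (c) implies (d)'' \cite[Lemma~2.4]{ds}, using only Lemma~\ref{lem:rgf_containment} for (d)$\Rightarrow$(b); the Catalan count is likewise cited. Your proof replaces the two borrowed lemmas with an explicit mechanism: the auxiliary identity $\asc(\A)=\max(\A)$ for $0101$-avoiding RGFs (proved by showing every non-first-appearance ascent manufactures a $0101$ via the witness positions $q<s<t<t+1$), the induction on length showing that a $0101$-avoiding ascent sequence is forced to be an RGF because $\a_n\le\asc(\A')+1=\max(\A')+1$, and the first-occurrence argument upgrading $0101$-avoidance to $101$-avoidance inside an RGF. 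I checked these steps and they are sound — in particular the claim that all values preceding the first occurrence of $j$ in an RGF are strictly less than $j$, which underlies both the count of first-appearance ascents and the existence of the position $q<s$. What your approach buys is a proof readable without \cite{ds} in hand and a reusable structural fact ($\asc=\max$ on this class, equivalently that every ascent is a first-appearance ascent); what the paper's approach buys is brevity and an explicit link to the enumeration in \cite{ds}, which it needs anyway for the Catalan count — a dependence your proof also retains, since you cite \cite{ds} for the final enumeration rather than, say, \cite[Exer.~88]{catalan} applied to characterization (d).
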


\begin{proof}
That (a) implies (b) is immediate, as is the implication from (b) to (c).  That (a) and (c) are equivalent is~\cite[Theorem~2.5]{ds}.  Lemma~\ref{lem:rgf_containment} gives that  (d) implies (b), while~\cite[Lemma~2.4]{ds} gives that both (a) or (c) imply (d).  

The last assertion is shown in~\cite[Theorem~2.5]{ds} and also follows from the fact that the number of sequences $\A$ of length $n$ satisfying (d) is shown to be $C_n$ in~\cite[Exer.~88]{catalan}.  
\end{proof}

\begin{definition}
Let $\CAsc_n$ denote the set of those sequences of length $n$ consisting of non-negative integers that satisfy the conditions of Proposition~\ref{pro:as_catalan}. 
\end{definition}

As an example, of the 15 ascent sequences of length 4, the only one containing $abab$ is $0101$.  Thus $14 = C_4 = |\CAsc_4|$.


It is certainly not the case that $\CAsc_n = \RAsc_n$ since $01021 \in \RAsc_n \setminus \CAsc_n$.  However, we do have the following relationship.

\begin{prop}\label{pro:RcontainsS}
$\CAsc_n \subseteq \RAsc_n$.
\end{prop}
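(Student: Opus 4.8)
The plan is to verify the defining condition \eqref{equ:rasc} of $\RAsc_n$ for an arbitrary $\A \in \CAsc_n$, arguing by contradiction. By Proposition~\ref{pro:as_catalan}, every element of $\CAsc_n$ is simultaneously a $0101$-avoiding ascent sequence and an $\RGF$, and I would exploit both of these facts: the $\RGF$ property to guarantee that certain values reappear early in the sequence, and $0101$-avoidance to derive the contradiction.

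Suppose $\A = (\a_1, \ldots, \a_n) \in \CAsc_n$ fails \eqref{equ:rasc}, so that there is an index $k > 1$ with $\a_k \notin [0, \a_{k-1}] \cup \{1 + \max(\a_1, \ldots, \a_{k-1})\}$. Since $\a_k \geq 0$, failure of membership in $[0, \a_{k-1}]$ forces $\a_k > \a_{k-1}$. On the other hand, because $\A$ is an $\RGF$, if $\a_k > 0$ then $\a_k - 1$ must already appear among $\a_1, \ldots, \a_{k-1}$, whence $\a_k \leq 1 + \max(\a_1, \ldots, \a_{k-1})$. Combining this bound with $\a_k \neq 1 + \max(\a_1, \ldots, \a_{k-1})$ yields $\a_{k-1} < \a_k \leq \max(\a_1, \ldots, \a_{k-1})$. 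Writing $u = \a_{k-1}$ and $v = \a_k$, I then have $u < v$ together with $v \leq \max(\a_1, \ldots, \a_{k-1})$.

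The heart of the argument is to manufacture a $0101$ pattern from these data by applying the $\RGF$ property twice. Since $v \leq \max(\a_1, \ldots, \a_{k-1})$ and $\A$ is an $\RGF$, the value $v$ already appears in the prefix $(\a_1, \ldots, \a_{k-1})$; let $p$ be the position of its first appearance. Because $\a_{k-1} = u < v$, we have $p \neq k-1$, so in fact $p \leq k-2$. Applying the equivalent form of the $\RGF$ property at this first appearance of $v$, every value strictly less than $v$—in particular $u$—must occur before position $p$; fix a position $q < p$ with $\a_q = u$. Then $q < p < k-1 < k$ are four strictly increasing positions carrying the values $u, v, u, v$ with $u < v$, which is an occurrence of the pattern $0101$. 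This contradicts the $0101$-avoidance of $\A$, completing the argument.

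I expect the only delicate point to be the index bookkeeping: verifying the strict chain $q < p < k-1 < k$ so that the four chosen positions are genuinely distinct and increasing, where the observation $\a_{k-1} = u \neq v$ is exactly what rules out $p = k-1$. Everything else reduces to two clean invocations of the defining property of $\RGF$s, together with the conversion—again via the $\RGF$ property—of the ascent-sequence growth bound into the inequality $\a_k \leq 1 + \max(\a_1, \ldots, \a_{k-1})$ that pins down the failure to the range $\a_{k-1} < \a_k \leq \max(\a_1, \ldots, \a_{k-1})$.
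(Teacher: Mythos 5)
Your proof is correct, but it takes a noticeably different route from the paper's. The paper argues contrapositively from the original $\asc$-based definition \eqref{equ:rasc_original} of $\RAsc_n$ and splits into three cases according to whether the offending value $\a_i$ equals, lies below, or exceeds every earlier entry; the last case requires a pigeonhole argument counting the contributions of first appearances to $\asc(\a_1,\ldots,\a_{i-1})$ in order to locate a repeated value creating two ascents. You instead start from the $\max$-based characterization \eqref{equ:rasc}, and the $\RGF$ property immediately converts the failure condition into $\a_{k-1} < \a_k \leq \max(\a_1,\ldots,\a_{k-1})$, which forces $\a_k$ to have already appeared in the prefix; this collapses the paper's three cases into a single thread and eliminates the counting argument entirely. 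The trade-off is that you lean on the equivalence of \eqref{equ:rasc_original} and \eqref{equ:rasc} (the paper's Lemma in Section 3), whereas the paper's proof of this proposition is self-contained modulo Proposition~\ref{pro:as_catalan}. Two small remarks: your final step of locating the fourth position $q$ to build a $0101$ occurrence is unnecessary, since the positions $p < k-1 < k$ already carry the values $v,u,v$ with $u < v$ and hence give an occurrence of $101$, contradicting Proposition~\ref{pro:as_catalan}(a) directly; and your claim that $v$ appears in the prefix because $v \leq \max(\a_1,\ldots,\a_{k-1})$ does hold, but it uses the ``equivalently'' form of the $\RGF$ definition (the values occurring in any prefix form an interval $[0,m]$), which is worth stating explicitly.
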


\begin{proof}
Let $\A = (\a_1, \ldots, \a_n)$ be an ascent sequence that is not in $\RAsc_n$.  We will show that $\A$ violates one of the characterizations of $\CAsc_n$ from Proposition~\ref{pro:as_catalan}.  By definition of $\RAsc_n$, there exists $i$ such that  $\a_{i-1} < \a_i < 1 + \asc(\a_1, \ldots, \a_{i-1})$.    If $\a_i = \a_j$ for some $j < i-1$, then $\a_j \a_{i-1} \a_i$ is an occurrence of 101 in $\A$, violating Proposition~\ref{pro:as_catalan}(a).   If $\a_j > \a_i$ for some $j < i-1$, then the RGF property implied by Proposition~\ref{pro:as_catalan}(d) would again imply the appearance of the subsequence  $\a_i \a_{i-1} \a_i$  in $\A$.  Thus we can assume that $\a_j <\a_i$ for all $j$ with $1 \leq j \leq i-1$.  

If $\A \in \CAsc_n$, we know $\A$ is an RGF, so the first appearance $\a_j$ of any \linebreak non-zero element of $(\a_1, \ldots, \a_{i-1})$ results in position $j-1$ contributing $+1$ to $\asc(\a_1, \ldots, \a_{i-1})$.  
Because $\a_j < \a_i$, the combined contribution of these first appearances to $\asc(\a_1, \ldots, \a_{i-1})$ is at most $\a_i-1$.
Since $\a_i -1 <  \asc(\a_1, \ldots, \a_{i-1})$, there must then be a value $\a_j$ that results in at least two ascents, i.e., there must be exist $j$ and $k$ such that $1 < j < k \leq i-1$ with $\a_j=\a_k$ such that both $j-1$ and $k-1$ are ascents.  Consequently, $\a_j \a_{k-1} \a_k$ is an occurrence of 101 in $\A$, again violating Proposition~\ref{pro:as_catalan}(a).
\end{proof}

We next turn to determining the image of $\CAsc_n$ under our bijections.  We begin with posets since this case is perhaps the most interesting.  Recall from Figure~\ref{fig:summary} that $\CPosets_n$ denotes the series-parallel posets in $\Posets_n$.  In other words, $\CPosets_n$ consists of those posets with $n$ elements that are both $(2+2)$-free and N-free.

\begin{prop}\label{pro:casc}
$\bap: \CAsc_n \to \CPosets_n$, and $\bap$ is a bijection.
\end{prop}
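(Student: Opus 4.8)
The plan is to leverage everything already set up for the $\R$-families. By Proposition~\ref{pro:as_catalan}(a) and Proposition~\ref{pro:RcontainsS}, the set $\CAsc_n$ is exactly the set of $101$-avoiding members of $\RAsc_n$, and by Corollary~\ref{cor:restricted_bijections} the map $\bap$ already restricts to a bijection $\RAsc_n \to \RPosets_n$. Hence $\bap$ is automatically injective on $\CAsc_n$ and lands among $(2+2)$-free posets, so it remains only to identify the image. Since $\CPosets_n$ consists of the posets in $\Posets_n$ that are additionally N-free, the statement reduces to the single equivalence: for $\A \in \RAsc_n$, the sequence $\A$ is $101$-avoiding if and only if $P := \bap(\A)$ contains no induced N (together with the fact $\CPosets_n \subseteq \RPosets_n$, needed for surjectivity). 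Throughout I would pass to the matrix $M := \bam(\A)$, which by the commutativity $\bpm \circ \bap = \bam$ equals $\bpm(P)$, so that $P = \bmp(M)$ and I may read off relations from the cell order of Definition~\ref{def:bmp}, namely $p \prec q$ iff $c(p) < r(q)$.

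The first key step is a purely matrix-theoretic characterization of N's. I would prove that, for any $M \in \Matrices$, the poset $\bmp(M)$ contains an induced N if and only if $M$ contains what I will call an \emph{SE-pair}: two nonzero entries in positions $(r_1,c_1)$ and $(r_2,c_2)$ with $r_1 < r_2 \le c_1 < c_2$. One direction is quick: writing the four elements of an N as cells and translating the three covering relations and the incomparabilities through $p \prec q \iff c(p) < r(q)$ forces exactly the interleaving $r_1 < r_2 \le c_1 < c_2$ on the two cells that span from a bottom to a top. For the converse, given an SE-pair I would build the N explicitly: take $w = (r_1,c_1)$ and $z = (r_2,c_2)$, and complete it with a nonzero cell $x$ in column $r_1$ and a nonzero cell $y$ in row $c_2$, both of which exist because $M$ has no zero row or column. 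A short verification confirms that the induced subposet on $\{x,w,y,z\}$ has precisely the relations $x \prec y$, $x \prec z$, $w \prec y$ and no others, i.e.\ is an N. As a byproduct, if some diagonal entry $m_{ii}$ vanishes then the guaranteed nonzero entries in row $i$ and column $i$ themselves form an SE-pair; hence every SE-free matrix in $\Matrices$ has positive diagonal, which via Proposition~\ref{pro:rposets_to_rmatrices} yields $\CPosets_n \subseteq \RPosets_n$.

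The second key step translates the SE-pair condition into a pattern condition on $\A$. For $\A \in \RAsc$ we have $M \in \RMatrices$, and Lemma~\ref{rmattorasc} writes $\A$ in block form in which the value $v$ occurs in a ``column-group'' $c$ with multiplicity $m_{v+1,c}$, the groups appearing in the order $c = 1, 2, \ldots, d$ and the values within each group listed in strictly decreasing order. I would then check directly that an occurrence of $101$ (two equal entries $a_i = a_k = u$ with a strictly smaller entry $a_j = u'$ strictly between them) corresponds exactly to the SE-pair formed by the cells $(u'+1, c_j)$ and $(u+1, c_k)$, where $c_j$ and $c_k$ are the column-groups of $a_j$ and $a_k$; the interleaving $r_1 < r_2 \le c_1 < c_2$ falls out from $u' < u$, from $c_j \ge c_i \ge u+1$, and from $c_j < c_k$ (the last because within a single group the higher value always precedes the lower one, so two equal entries straddling a smaller one cannot share a group). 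The reverse passage from an SE-pair back to a $101$ is the mirror-image bookkeeping.

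Combining the two steps gives, for $\A \in \RAsc_n$, the chain of equivalences that $\A$ is $101$-avoiding $\iff$ $M$ is SE-free $\iff$ $P = \bap(\A)$ is N-free $\iff$ $P \in \CPosets_n$; with $\CPosets_n \subseteq \RPosets_n$ in hand, this pins the image of $\CAsc_n$ to be exactly $\CPosets_n$, and bijectivity is inherited from the bijection $\bap\colon \RAsc_n \to \RPosets_n$. I expect the main obstacle to be the N-versus-SE-pair equivalence of the first step: one must produce all four vertices of the N from only the two SE-pair cells, invoking the no-zero-row and no-zero-column hypothesis, and then verify that the induced order relations are \emph{exactly} those of an N with no spurious comparabilities. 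The block-form translation of the second step is conceptually routine but needs care in the boundary case where the two equal entries of the $101$ and the smaller entry lie in adjacent column-groups.
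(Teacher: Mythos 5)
Your argument is correct, but it takes a genuinely different route from the paper's. The paper proves only the single inclusion $\bap(\CAsc_n) \subseteq \CPosets_n$ and then gets equality for free from the count $|\CAsc_n| = |\CPosets_n| = C_n$; that inclusion is established directly on the poset side, by supposing $P = \bap(\A)$ contains an N, placing its four elements on levels $\a_i,\a_j,\a_k,\a_\eell$ via \cite[Lem.~7]{bcdk}, and extracting a $101$ occurrence from $\A$ in two cases according to which of the two top elements of the N was inserted first. You instead route everything through $M = \bam(\A) = \bpm(P)$ and prove the full two-way equivalence ($\A$ is $101$-avoiding $\iff$ $M$ is SE-free $\iff$ $P$ is N-free), which means reproving Jel\'inek's \cite[Lem.~1.2]{JelFishburn} --- the N-versus-SE-pair correspondence that the paper cites rather than proves, appearing here as Corollary~\ref{cor:c}(b) --- and then matching SE-pairs against $101$ occurrences in the block form of Lemma~\ref{rmattorasc}. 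I checked both of your key steps and they go through: in particular, in your construction of an N from an SE-pair the four cells are automatically distinct, and no two elements of an N can ever share a cell since no two have the same relation profile. What your approach buys is self-containedness (no appeal to the Catalan enumeration of series-parallel interval orders from \cite[Exer.~182]{catalan}, nor to \cite[Lem.~7]{bcdk}) plus Corollary~\ref{cor:c}(b) as a byproduct; the cost is noticeably more bookkeeping. Two points to tighten: (i) the justification of $c_j < c_k$ should say that $a_j$ and $a_k$ cannot share a column-group because $a_j < a_k$ while values within a group are decreasing; (ii) the passage from an SE-pair back to a $101$ is not quite ``mirror-image'' --- to produce an occurrence of the value $r_2 - 1$ \emph{before} position $j$ you must use $m_{r_2 r_2} > 0$ (placing that value in column-group $r_2 \le c_1$, or at the top of group $c_1$ when $r_2 = c_1$), i.e., the positive-diagonal property of $\RMatrices$; this is available in your setting but should be stated, since that direction of the equivalence is genuinely needed for surjectivity.
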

 
\begin{proof}
Since $\bap:\Asc_n\to\Posets_n$ is a bijection and since $\CAsc_n$ and $\CPosets_n$ both have size $C_n$ by Proposition~\ref{pro:as_catalan} and~\cite[Exer.~182]{catalan}, it suffices to prove that $\bap(\A) \in \CPosets_n$ for any $\A = (\a_1, \ldots, \a_n) \in \CAsc_n$.    So suppose $\bap(\A) = P$ contains an N.  Since $\A \in \RAsc_n$ by Proposition~\ref{pro:RcontainsS}, we know that only rules \ap1 and \ap2 from Definition~\ref{def:bap} are used in constructing $P$. Let $i, j, k, \eell$ be distinct positions in $\A$ that result (under $\bap$) in the elements $e, f, g, h$, respectively, of $P$ which form an N as, for example, in Figure~\ref{fig:N}. 
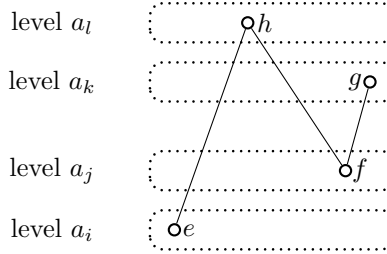
\begin{figure}
\begin{center}
\begin{tikzpicture}[scale=0.65]
\tikzstyle{every node}=[circle, inner sep=1.5pt];
\begin{scope}
\draw[thick] (0.5,0) node[draw] (e) {$$};
\draw (0.8,0) node {$e$};
\draw (-2,0) node {level $\a_i$};
\draw [line width=1pt, line cap=round, dash pattern=on 0pt off 3pt, rounded corners=5pt] (0,-0.4) rectangle (5,0.4);
\end{scope}
\begin{scope}[yshift=8ex]
\draw[thick] (4,0) node[draw] (f) {$$};
\draw (4.3,0) node {$f$};
\draw (-2,0) node {level $\a_j$};
\draw [line width=1pt, line cap=round, dash pattern=on 0pt off 3pt, rounded corners=5pt] (0,-0.4) rectangle (5,0.4);
\end{scope}
\begin{scope}[yshift=20ex]
\draw[thick] (4.5,0) node[draw] (g) {$$};
\draw (4.2,0) node {$g$};
\draw (-2,0) node {level $\a_k$};
\draw [line width=1pt, line cap=round, dash pattern=on 0pt off 3pt, rounded corners=5pt] (0,-0.4) rectangle (5,0.4);
\end{scope}
\begin{scope}[yshift=28ex]
\draw[thick] (2,0) node[draw] (h) {$$};
\draw (2.35,0) node {$h$};
\draw (-2,0) node {level $\a_\eell$};
\draw [line width=1pt, line cap=round, dash pattern=on 0pt off 3pt, rounded corners=5pt] (0,-0.4) rectangle (5,0.4);
\end{scope}
\draw (e) -- (h) -- (f) -- (g);
\end{tikzpicture}
\caption{For ease of reference of notation, we give one possible configuration of the N-subposet of $P$ from the proof of Proposition~\ref{pro:casc}.  This is just one of two possible configurations, since there is no mathematical reason why level $\a_i$ should appear below level $\a_j$.}
\label{fig:N}
\end{center}
\end{figure}
Since $\A \in \CAsc_n \subseteq \RAsc_n$, applying~\cite[Lem.~7]{bcdk} tells us that $e, f, g, h$ appear at levels $\a_i$, $\a_j$, $\a_k$, $\a_\eell$ respectively of $P$.  
Comparing strict downsets, we see that $\a_\eell > \a_k > \a_j, \a_i$.   There are two cases to consider depending on the relative values of the positions $k$ and $\eell$.  

We first consider the case $\eell > k$.  Since $\A$ is an RGF by Proposition~\ref{pro:as_catalan}, in constructing $P$, the first time an element was added at level $\a_\eell$ it must have been added as a top element using the rule \ap2.  But since $h \not>_P g$, there must be some element $h'$ at level $\a_\eell$ that was added before $g$ (and so also before $h$).  Thus we have $\a_\eell \a_k \a_\eell$ appearing in $\A$, contradicting Proposition~\ref{pro:as_catalan}(a).

It remains to consider the case $\eell < k$.  With the same reasoning as the previous paragraph, there exists an element $g'$ at level $\a_k$ that was added as a top element.  Since $e \not\in D_g$ and $g$ and $g'$ are at the same level, we know $e \not\in D_{g'}$.  Thus $g'$ was added to $P$ before $e$.  On the other hand, since $h >_P e$, we know $h$ was added to $P$ after $e$, and hence so was $g$.  Thus we have $\a_k \a_i \a_k$ appearing in $\A$, again contradicting Proposition~\ref{pro:as_catalan}(a).
\end{proof}

We follow Jel\'inek \cite{JelFishburn} to define the family $\CMatrices_n$.

\begin{definition}
An SE-pair of a matrix $M \in \Matrices$ is a pair of non-zero entries $m_{ij}$ and $m_{i'j'}$ such that $i < i'$, $j<j'$ and $i' \leq j$.  We say that $M$ is \emph{SE-free} if it contains no SE-pair.
\end{definition}

In \cite[Lem.~1.2]{JelFishburn}, Jel\'inek shows that $P \in \Posets$ is series-parallel if and only if $\bpm(P)$ is SE-free.

\begin{definition}
Define $\CMatrices_n$ to be the subset of $\Matrices_n$ consisting of those elements that are SE-free.
\end{definition}

%

Finally, we determine the $\C$-family for permutations.  The answer is quite appealing, namely $\CPerms = \sn(231)$.

\begin{remark}
At this point, it is worth clarifying the containment relations of Figure~\ref{fig:summary}.  We already showed in Proposition~\ref{pro:RcontainsS} that if $\A \in \Asc$, then $\A$ being 101-avoiding automatically implies that $\A$ is self-modified.  That the analogous implications hold for the other three $\C$-families follows from Theorem~\ref{cor:c}, but can also be checked directly as follows:
\begin{itemize}
\item  If $\pi \in \sn(231)$ then $\pi \in \sn(3\overline{1}52\overline{4})$ by definition of $\sn(3\overline{1}52\overline{4})$.
\item  If $M \in \Matrices$ is a $k$-by-$k$ matrix that is SE-free, then it must have only positive diagonal entries.  To see this, suppose $m_{ii} = 0$ for some $i$.  By definition of $\Matrices$, we have $2 \leq i \leq k-1$, and there must exist an SE-pair formed by $m_{ai}$ and $m_{ib}$ where $1\leq a < i$ and $i < b \leq k$. 
\item We leave it as an exercise to show that if $P \in \Posets$ is series-parallel, then $P$ has a chain of length $\level(P)$.
\end{itemize}
\end{remark}

\begin{prop}
$\bas$ maps $\CAsc_n$ bijectively to $\CPerms$.
\end{prop}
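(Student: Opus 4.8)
The plan is to combine a cardinality count with a single containment, mirroring the structure of the proof of Proposition~\ref{pro:casc}. Since $\CAsc_n \subseteq \RAsc_n$ by Proposition~\ref{pro:RcontainsS} and $\bas$ restricts to a bijection on $\RAsc_n$ by Corollary~\ref{cor:restricted_bijections}(c), the map $\bas$ is in particular injective on $\CAsc_n$. Moreover $|\CAsc_n| = C_n$ by Proposition~\ref{pro:as_catalan}, and $|\sn(231)| = C_n$ is classical. Hence any injection $\bas\colon \CAsc_n \to \sn(231)$ is automatically a bijection, and it remains only to prove the containment $\bas(\CAsc_n) \subseteq \sn(231)$; that is, that $\bas(\A)$ avoids the classical pattern $231$ whenever $\A \in \CAsc_n$.

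To establish this containment I would work with the simpler description of $\bas$ on $\RAsc_n$ recorded in~\eqref{equ:wdef} (from \cite[Cor.~9]{bcdk}): writing $\pi = \bas(\A) = W_0(\A)\cdots W_k(\A)$, the entry $x \in [1,n]$ of $\pi$ lies in block $W_{\a_x}$, and within a block the entries (which are \emph{positions} of $\A$) appear in decreasing order. I would argue by contraposition: assume $\A \in \CAsc_n$ but that $\pi$ contains an occurrence of $231$, say $\pi_p = s$, $\pi_q = t$, $\pi_r = u$ with $p<q<r$ and $u < s < t$, and derive that $\A$ contains $101$, contradicting Proposition~\ref{pro:as_catalan}(a).

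The key step is to read off constraints on the values $\a_s, \a_t, \a_u$ from the block structure. Because $s$ precedes $t$ in $\pi$ while $s<t$ as positions of $\A$, they cannot lie in the same block (within a block the larger position comes first), forcing $\a_s < \a_t$; because $t$ precedes $u$ in $\pi$ while $u < t$, we get $\a_t \le \a_u$. Thus $u < s < t$ in $\A$ with $\a_u \ge \a_t > \a_s$. If $\a_u = \a_t$, then $\a_u, \a_s, \a_t$ is immediately a $101$ at positions $u<s<t$. If instead $\a_u > \a_t$, I would invoke the RGF property of $\A$ (Proposition~\ref{pro:as_catalan}(d)): since the value $\a_u > 0$ occurs at position $u$, its first occurrence is preceded by an occurrence of the smaller value $\a_t$, yielding a position $g < u < s$ with $\a_g = \a_t$; then $\a_g = \a_t > \a_s$ at positions $g < s < t$ is again a $101$. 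Either way we contradict $\A \in \CAsc_n$, completing the containment and hence the proof.

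I expect the main (though modest) obstacle to be the bookkeeping that converts ``$\pi$ contains $231$'' into the clean inequality $u<s<t$ with $\a_u \ge \a_t > \a_s$. The care required is that the entries of $\pi$ are the positions of $\A$, so the pattern inequalities among the \emph{values} of $\pi$ become inequalities among the \emph{positions} of $\A$, whereas the left-to-right order in $\pi$ encodes the block order, i.e.\ the order of the corresponding values of $\A$. Once this translation is set up correctly, the two-case argument above—direct when $\a_u = \a_t$, and via the RGF property when $\a_u > \a_t$—finishes the containment, and the cardinality comparison delivers the bijection.
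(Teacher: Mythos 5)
Your proof is correct, and it shares the paper's overall skeleton (the cardinality count $|\CAsc_n| = C_n = |\sn(231)|$ reduces everything to a single containment), but you prove the containment in the opposite direction, which changes the combinatorial core of the argument. The paper shows $\CPerms \subseteq \bas(\CAsc_n)$ by taking a $101$ occurrence $(\a_i,\a_j,\a_k)=(d,c,d)$ in $\A$ and reading off directly from \eqref{equ:wdef} that the values $j,k,i$ form a $231$ in $\bas(\A)$ --- a three-line construction needing no case analysis. You instead show $\bas(\CAsc_n) \subseteq \CPerms$: starting from a $231$ occurrence in $\pi$, you translate the positional/value data through the block structure to get positions $u<s<t$ with $\a_u \ge \a_t > \a_s$, and then must split into the cases $\a_u = \a_t$ (immediate $101$) and $\a_u > \a_t$ (where you need the RGF property of $\A$, via Proposition~\ref{pro:as_catalan}(d), to manufacture an earlier occurrence of the value $\a_t$ and hence a $101$). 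Both routes are valid; yours costs an extra case and an appeal to the RGF characterization, since the two ``high'' entries of the prospective $101$ need not coincide a priori, whereas the paper's direction hands you the repeated value for free. One small quibble: you describe the argument as contraposition, but since your second case uses the hypothesis $\A \in \CAsc_n$ (through the RGF property), it is really a proof by contradiction; this does not affect validity.
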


\begin{proof}
The definition of $\CAsc_n$ and the fact that $\bas$ is a bijection tells us that $|\bas(\CAsc_n)| = C_n$.
As is well known, $C_n$ is also the cardinality of $\CPerms$.  Thus it suffices to show that $\CPerms\subseteq \bas(\CAsc_n)$.  Crucial to our proof is that, from Propositions~\ref{pro:as_catalan}(a) and \ref{pro:RcontainsS}, we have $\CAsc_n = \RAsc_n(101)$.

Corollary~\ref{cor:restricted_bijections}(c) and the definition of $\RPerms$ give $\bas(\RAsc_n) = \RPerms \supseteq \CPerms$.  Thus if $\bas(\A) \in \CPerms$, then $\A \in \RAsc_n$.  We wish to show the more refined fact that $\A \in \CAsc_n$, i.e., that $\A$ avoids $101$.  So suppose that $\A$ contains a $101$ pattern as $(\a_i,\a_j,\a_k)=(d,c,d)$.  Then in $\bas(\A)$, using the definition of $\bas(\A)$ given in \eqref{equ:wdef}, $j$ will be in $W_c(\A)$ which is to the left of $W_d(\A)$. In $W_d(\A)$, $k$ will be to the left of $i$ since $k>i$.
Thus $(j,k,i)$ will be a 231 pattern in $\bas(\A)$, which is the desired contradiction.
\end{proof}

We now have all the necessary ingredients to compile the following ``Catalan'' refinement of Corollary~\ref{cor:restricted_bijections}

\begin{corollary}\label{cor:c} The classical bijections among $\Asc_n$, $\Matrices_n$, $\Posets_n$ and $\Perms$ restrict to bijections among $\CAsc_n$, $\CMatrices_n$, $\CPosets_n$ and $\CPerms$. Specifically, 
\begin{enumerate} 
\item $\bap$ maps $\CAsc_n$ bijectively to $\CPosets_n$;
\item \cite{JelFishburn} $\bpm$ maps $\CPosets_n$ bijectively to $\CMatrices_n$;
\item $\bas$ maps $\CAsc_n$ bijectively to $\CPerms$.
\end{enumerate}
\end{corollary}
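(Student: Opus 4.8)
The plan is to assemble Corollary~\ref{cor:c} from results already in hand, since each of its three itemized parts has essentially been secured and the only remaining work is to knit them together into a statement about the whole diagram. Part~(a) is precisely Proposition~\ref{pro:casc}, and part~(c) is the Proposition proved immediately above, so for those I would simply cite the respective results and note that in each case the map is a restriction of a classical bijection whose image and domain have both been pinned down. The one part deserving a sentence of its own is~(b).

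For part~(b), I would invoke Jel\'inek's characterization \cite[Lem.~1.2]{JelFishburn}, which states that $P \in \Posets$ is series-parallel if and only if $\bpm(P)$ is SE-free. By definition, $\CPosets_n$ consists of the series-parallel posets in $\Posets_n$ and $\CMatrices_n$ consists of the SE-free matrices in $\Matrices_n$. Since $\bpm \colon \Posets_n \to \Matrices_n$ is already known to be a bijection, this equivalence says exactly that $\bpm$ carries $\CPosets_n$ onto $\CMatrices_n$; injectivity is inherited from $\bpm$, so the restriction is a bijection. Thus~(b) follows with no new argument.

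To upgrade parts~(a)--(c) to the global claim that \emph{all} the maps of Figure~\ref{fig:summary} restrict consistently to the $\C$-families, I would combine these three bijections with the commutativity of the diagram at the level of $\Asc$, $\Posets$ and $\Matrices$. Recall from the subsection ``Relationships among the bijections'' that $\bpm \circ \bap = \bam$. Hence $\bam$ restricts to the composite $\bpm \circ \bap \colon \CAsc_n \to \CPosets_n \to \CMatrices_n$, a bijection; and $\bps = \bas \circ \bpa$ restricts likewise. A convenient safeguard is a cardinality check: all four families have size $C_n$, using Proposition~\ref{pro:as_catalan} for $\CAsc_n$, \cite[Exer.~182]{catalan} for $\CPosets_n$, part~(b) for $\CMatrices_n$, and $\CPerms = \sn(231)$ for the permutations. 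With equal finite cardinalities, any restricted injection is automatically a bijection, which lets me conclude that every path through the diagram matches the $\C$-families consistently.

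The ``hard part'' here is not a combinatorial obstruction but rather a matter of consistency: I must make sure the $\C$-label attached to each of the four corners genuinely is the image of every other $\C$-corner under every route through Figure~\ref{fig:summary}, not merely under one chosen map. This is precisely why the commutativity remark $\bpm \circ \bap = \bam$ is load-bearing rather than decorative, and I would state it explicitly so that parts~(a)--(c) suffice to propagate the restriction to the remaining arrows without any separate verification.
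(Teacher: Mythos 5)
Your proposal is correct and matches the paper's approach: the paper offers no separate proof of this corollary, presenting it as a compilation of Proposition~\ref{pro:casc} for part~(a), Jel\'inek's \cite[Lem.~1.2]{JelFishburn} together with the bijectivity of $\bpm$ for part~(b), and the immediately preceding proposition for part~(c). Your additional remarks on commutativity and the cardinality check are consistent with, and slightly more explicit than, what the paper leaves implicit.
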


\section{Open problems}\label{sec:conclusion}

Comparing our bijections, the definition of those between $\Matrices$ and $\Posets$ are perhaps the simplest, which helps with our understanding of that correspondence.  In particular, although we determined in Section~\ref{sec:duality} the effect of poset duality on just the $\R$-families, the result from \cite{djk} that $M^* = \flip{M}$ given in Example~\ref{exa:matrix_dual} holds for all $M \in \Matrices$.  At the other extreme, the bijections $\bam$, $\bap$, and their inverses are the most complicated, especially because of the cases \am3 and \ap3.  It is not surprising, therefore, that our open problems all involve ascent sequences. 

As we have already mentioned, we have not answered the following question of \cite{dp} for $\A \in \Asc \setminus \RAsc$.

\begin{problem}  
Given $P \in \Posets$ with $\bpa(P) = \A$, how do we obtain $\bpa(P^*)$ from $\A$?
\end{problem}

The following problem has already been considered in \cite{ky,kr}.

\begin{problem}\label{pro:semiorder}
Under $\bap$, which ascent sequences give rise to posets that are both (2+2)-free and (3+1)-free?
\end{problem}

Posets that are both (2+2)- and (3+1)-free are known as \emph{semiorders} or \emph{unit interval orders}, and it is well known that they are counted by the Catalan numbers.  Analogous to Problem~\ref{pro:semiorder}, a question already answered by \cite[Prop.~16]{djk} is to determine those $M \in \Matrices$ that map under $\bmp$ to semiorders.  The answer is those $M$ such that there does not exist $i > i'$ and $j<j'$ with $m_{ij}m_{i'j'}\neq0$.  For example, in Figure~\ref{fig:examples}, $m_{33}m_{24} \neq 0$ while $\{p_3, p_4, p_5, p_6\}$ form a (3+1) in $P$.

Obtaining sets in bijection with semiorders can be motivated by a famous problem in symmetric functions: a conjecture of Stanley and Stembridge \cite{ss} states that the chromatic symmetric functions of incomparability graphs of (3+1)-free posets are $e$-positive.  
Guay-Paquet \cite{Gua} has reduced this conjecture to the case of semiorders.  So perhaps an alternative characterization of semiorders would give insight into the conjecture.  For example, we have a characterization of semiorders in terms of matrices $M$ in the previous paragraph, and \cite[Lem.~15]{djk} tells us an easy way to identify which elements of $\bmp(M)$ are incomparable.  

Finally, Proposition~\ref{pro:casc} suggests the following modification of Problem~\ref{pro:semiorder}.

\begin{problem}
Under $\bap$, which ascent sequences give rise to posets that are (2+2)-free, (3+1)-free and N-free?
\end{problem}

\section*{acknowledgements}
We are grateful to the anonymous referees (both for the present paper and the FPSAC 2019 extended abstract) for their helpful suggestions.  We thank V\'it Jel\'inek for the reference for Collary~5.9(b).
This work was partially supported by an EPSRC grant (EP/M015874/1 to Mark Dukes) and by a grant from the Simons Foundation (\#245597 to Peter McNamara).


\end{document}